\newtheorem{theorem}{Theorem}[section]
\newtheorem{prop}[theorem]{Proposition}
\newtheorem{conj}[theorem]{Conjecture}
\newtheorem{lemma}[theorem]{Lemma}
\newtheorem{cor}[theorem]{Corollary}
\theoremstyle{definition}
\newtheorem{ex}[theorem]{Example}
\newtheorem{defin}[theorem]{Definition}
\newtheorem{remark}[theorem]{Remark}
\newcommand\circled[1]{%
  \mathpalette\@circled{#1}%
}
\newcommand\@circled[2]{%
  \tikz[baseline=(math.base)] \node[draw,circle,inner sep=1pt] (math) {$\m@th#1#2$};%
}
\title{Bounds on Schubert coefficients in the two-row case}
\author{Zijie Tao}
\address{Department of Mathematics, Peking University}
\email{\href{mailto:2100012931@stu.pku.edu.cn}{{\tt 2100012931@stu.pku.edu.cn}}}
\author{Yunchi Zheng}
\address{Department of Mathematics, Nankai University}
\email{\href{mailto:2111565@mail.nankai.edu.cn}{{\tt 2111565@mail.nankai.edu.cn}}}
\date{\today}
\begin{document}
\begin{abstract}
We provide an upper bound for generalized Littlewood-Richardson coefficients $c_{uv}^w$, where $u$ is a two-row Young diagram corresponding to a Grassmannian permutation. We end with a conjecture on the upper bounds for all such structure constants.
\end{abstract}
\maketitle

\section{Introduction}\label{sec:intro}

The \emph{generalized Littlewood-Richardson coefficients} (also called \emph{Schubert structure constants}) are crucial in the theory of Schubert calculus, as they are the structure constants of the cohomology ring of the full flag variety with respect to the Schubert basis. Their importance lies in the center of Schubert calculus and its relation with combinatorics and geometry. For the special case of $s_\lambda s_\mu$ where $s_\lambda$ and $s_\mu$ are Schur polynomials, Littlewood and Richardson give a combinatorial construction for the structure coefficients. In 1996, Sottile \cite{sottile1996pieri} gave a Pieri's formula for calculating the product of a Schubert polynomial with the elementary symmetric polynomial and complete homogeneous polynomial. Later, Fomin and Kirillov~\cite{fomin1999quadratic} constructed a quadratic algebra, which is now called \emph{Fomin-Kirillov algebra}, and revealed that it has profound connections with Schubert calculus and quantum cohomology. Also, Postnikov~\cite{postnikov1999quantum} discovered a new Pieri formula in the quantum version by virtue of Fomin-Kirillov algebra. Using this \emph{quantum Pieri's formula}, Mészáros, Panova and Postnikov~\cite{meszaros2012schur} gave a solution to Fomin-Kirillov nonnegativity conjecture when the Schur polynomial is of hook shape. In \cite{buch2014mutationspuzzlesequivariantcohomology}, 
Buch introduced a mutation algorithm for puzzles that is a three-direction analogue of the classical jeu de taquin algorithm for semistandard tableaux. He applies this algorithm to establish puzzle formula for the equivariant Schubert structure constants of two-step flag varieties. This formula gives an expression for the structure constants that is positive in the sense of Graham. Recently, Daoji Huang~\cite{Huang_2022} provided a counting formula in terms of reduced word tableaux, for computing the structure constants of products of Schubert polynomials indexed by permutations with separated descents, and recognized that these structure constants are certain Edelman-Greene coefficients. 

Now we introduce our main results as follows. Let $S_n$ be the symmetric group of permutations on $[n]$. For any $w \in S_{n},$ let $\mathfrak{S}_{w}$ be the Schubert polynomial associated to $w$ as defined in Definition~\ref{D:Schubert polynomial}. Let $\lambda= (\lambda_{1}\geq \lambda_{2}\geq\cdots \geq \lambda_{d})$ be a partition of $n$ and let $s_{\lambda}$ be the Schur polynomial of $d$ variables associated to $\lambda$ as defined in Definition~\ref{D:Schubert polynomial}. Note that Schur polynomials are Schubert polynomials for Grassmannian permutations as defined in Definition~\ref{Grassmannian}. In the expansion
\[\mathfrak{S}_{u}\mathfrak{S}_{v}=\sum\limits_{w\in S_{n}}c_{uv}^{w}\mathfrak{S}_{w},
\]
$c_{uv}^w$ are called the \emph{generalized Littlewood-Richardson coefficients}. The following is our main result:
\begin{theorem}
\label{I: Main theorem 1}
    Let $n_1,n_2$ be two positive integers and $n=\max\{n_1,n_2\}$. Let $\tau$ be a $k$-Grassmannian permutation in $S_{n_1}.$ Suppose $\lambda$ is the Young diagram with $2$ rows corresponding to $\tau.$ Let $w$ be a permutation in $S_{n_2}.$  Write $\mathfrak{S}_{w}s_{\lambda}(x_1,\cdots,x_k)=\sum\limits_{v\in S_{n}}c_{w\tau}^{v}\mathfrak{S}_{v}$. Then
    \begin{enumerate}
        \item $c_{w\tau}^{v}\in\{0,1\}$ if $k=n_2$.
        \item $c_{w\tau}^{v}\in\{0,1\}$ if $k=n_2-1$.
        \item $c_{w\tau}^{v}\in \{0,1,2\}$ if $k=n_2-2$.
    \end{enumerate} 
\end{theorem}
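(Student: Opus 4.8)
The plan is to linearize the two-row Schur polynomial by the Jacobi--Trudi identity and then apply Pieri's rule twice. Writing $\lambda=(\lambda_1,\lambda_2)$ and abbreviating $h_r=h_r(x_1,\dots,x_k)$, Jacobi--Trudi gives
\[
s_{(\lambda_1,\lambda_2)}(x_1,\dots,x_k)=h_{\lambda_1}h_{\lambda_2}-h_{\lambda_1+1}h_{\lambda_2-1},
\]
so that
\[
\mathfrak{S}_w s_\lambda=\mathfrak{S}_w h_{\lambda_1}h_{\lambda_2}-\mathfrak{S}_w h_{\lambda_1+1}h_{\lambda_2-1}.
\]
Since the Schubert structure constants $c_{w\tau}^v$ are known to be nonnegative, it suffices to bound each coefficient of $\mathfrak{S}_v$ on the right from above.

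Next I would invoke Sottile's Pieri formula \cite{sottile1996pieri}, which expands $\mathfrak{S}_u h_r(x_1,\dots,x_k)$ as a multiplicity-free sum $\sum_{u'}\mathfrak{S}_{u'}$, where $u'$ runs over permutations obtained from $u$ by a chain of $r$ transpositions $t_{a_ib_i}$ with $a_i\le k<b_i$, each raising the length by one and satisfying Sottile's monotonicity condition; write $u\stackrel{r}{\longrightarrow}u'$ for such a chain. Applying this twice turns the coefficient of $\mathfrak{S}_v$ into a signed count of two-step chains:
\[
c_{w\tau}^v=\#\{u:w\stackrel{\lambda_1}{\longrightarrow}u\stackrel{\lambda_2}{\longrightarrow}v\}-\#\{u:w\stackrel{\lambda_1+1}{\longrightarrow}u\stackrel{\lambda_2-1}{\longrightarrow}v\}=|A|-|B|.
\]
I would then construct a sign-reversing injection $\phi\colon B\hookrightarrow A$, in the spirit of the Lindstr\"om--Gessel--Viennot / Bender--Knuth cancellation that underlies Jacobi--Trudi: given a chain in $B$, one slides a single box from the longer first strip (of size $\lambda_1+1$) into the shorter second strip (of size $\lambda_2-1$), producing a valid chain in $A$. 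Granting such a $\phi$, one has $c_{w\tau}^v=|A\setminus\phi(B)|$, so the entire problem reduces to bounding the number of chains in $A$ that do not arise as images.

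The main obstacle, and the crux of the proof, is estimating $|A\setminus\phi(B)|$, and this is precisely where the hypothesis on $k$ enters. Because $w\in S_{n_2}$ and $\mathfrak{S}_w$ involves only $x_1,\dots,x_{n_2-1}$, every transposition $t_{a_ib_i}$ in a Pieri chain has its large index $b_i$ lying in $\{k+1,\dots,n\}$; a crossing into a position beyond $n_2$ behaves like attaching a fresh box and is automatically reducible under $\phi$, so the only obstructions to reducibility come from the positions in $\{k+1,\dots,n_2\}$ that may be traversed in both steps — and there are exactly $n_2-k\in\{0,1,2\}$ of these. I would analyze how these few active positions can be shared between the two strips: when $k=n_2$ or $k=n_2-1$ such a window position contributes at most one irreducible chain, forcing $|A\setminus\phi(B)|\le 1$, whereas for $k=n_2-2$ the two available positions can jointly produce at most two irreducible chains, giving $|A\setminus\phi(B)|\le 2$. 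The delicate point is to verify, using Sottile's monotonicity condition to rule out further accumulation, that the regimes $k=n_2$ and $k=n_2-1$ — with $0$ and $1$ free window positions respectively — both collapse to the bound $1$; this needs a careful case analysis of the relative order of the indices $a_i,b_i$ near the boundary position $n_2$, which I expect to be the most technical part of the argument.
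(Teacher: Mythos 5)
Your overall strategy---Jacobi--Trudi to reduce to $h_{\lambda_1}h_{\lambda_2}-h_{\lambda_1+1}h_{\lambda_2-1}$, two applications of Sottile's Pieri rule to write $c_{w\tau}^v=|A|-|B|$ as a difference of chain counts, and then a cancellation argument---is exactly the one the paper uses. But as written the proposal defers the two steps that carry all the difficulty, and the quantitative claims you make about them are asserted rather than derived. First, the injection $\phi\colon B\hookrightarrow A$ is not constructed. It is not a routine Bender--Knuth or LGV involution: the objects are chains of transpositions $t_{a_ib_i}$ constrained by Sottile's condition that each step raises the length by exactly one with the $b_i$ distinct, and ``sliding a box from the first strip to the second'' must be checked to preserve this at every intermediate stage. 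The paper's proof of the corresponding inclusion (the claim $M_2\subseteq M_1$ in the $k=n_2$ case, and its analogues in parts (2)--(3)) is a genuine case analysis on whether the transferred transposition reuses the row $a$ of the other strip and on the relative order of the rows and window columns involved.

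Second, and more seriously, the bound on $|A\setminus\phi(B)|$ does not follow merely from counting the window positions $\{k+1,\dots,n_2\}$. A priori there is no finite bound on the number of chains in $A$ reaching a fixed $v$: what makes the count finite is the combination of (i) a classification of the intermediate permutations $u$ into finitely many equivalence classes according to which window positions, and how many fresh positions beyond $n_2$, their transpositions occupy (Lemmas~\ref{S:lemma 3.2}, \ref{S:lemma 3.5}, \ref{S:lemma 3.99}), and (ii) the key fact (Lemma~\ref{D:lemma 2.8}) that two \emph{distinct but equivalent} intermediates can never both continue to the same $v$ in the second Pieri step. Your proposal never isolates (ii), and without it the phrase ``at most one irreducible chain per window position'' has no proof. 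Moreover the numerology is off for $k=n_2-2$: there the intermediates fall into $12$ classes, up to $5$ of which can coexist for a single $v$ before any cancellation (Proposition~\ref{S:prop 3.10}), and reducing $5$ to $2$ requires matching the excess positive chains with occurrences of $v$ in $h_{\lambda_1+1}h_{\lambda_2-1}\mathfrak{S}_w$---that is, running the cancellation in the direction opposite to your $\phi$, from surplus elements of $A$ into $B$. So the skeleton is right and matches the paper, but the crux---the equivalence-class classification, the non-merging lemma, and the explicit matching of excess chains---is missing.
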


For $k$ in general positions, we have the following theorem:
\begin{theorem}
\label{I: Main theorem 2}
    Let $n_1,n_2$ be two positive integers and $n=\max\{n_1,n_2\}$. Let $\tau$ be a $k$-Grassmannian permutation in $S_{n_1}.$ Suppose $\lambda$ is the Young diagram with $2$ rows corresponding to $\tau$ and $\lambda=(m_1,m_2).$ Let $w$ be a permutation in $S_{n_2}.$ Write $\mathfrak{S}_{w}s_{\lambda}(x_1,\cdots,x_k)=\sum\limits_{v\in S_{n}}c_{ w\tau}^{v}\mathfrak{S}_{v}$. If $n_2-k<m_2,$ then $c_{w\tau}^{v}\leq 2^{n_2-k}(n_2-k)!.$ If we further assume that $w(k+1)>w(k+2)>\cdots w(n_2),$ we have $c_{w\tau}^{v}\leq 2^{n_2-k}.$
\end{theorem}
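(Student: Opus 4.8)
The plan is to reduce everything to a chain count via the two–row Jacobi--Trudi identity together with Sottile's Pieri formula, and then to bound that count by tracking how the ``large'' positions are used. Since $\lambda=(m_1,m_2)$ has exactly two rows, Jacobi--Trudi (with $h_0=1$, $h_{<0}=0$) gives, working throughout in the variables $x_1,\dots,x_k$,
\[
s_{(m_1,m_2)}(x_1,\dots,x_k)=h_{m_1}h_{m_2}-h_{m_1+1}h_{m_2-1}.
\]
Because the Schubert structure constants are nonnegative, this yields
\[
c_{w\tau}^{v}=[\mathfrak{S}_v]\bigl(\mathfrak{S}_w h_{m_1}h_{m_2}\bigr)-[\mathfrak{S}_v]\bigl(\mathfrak{S}_w h_{m_1+1}h_{m_2-1}\bigr)\le[\mathfrak{S}_v]\bigl(\mathfrak{S}_w h_{m_1}h_{m_2}\bigr),
\]
so it suffices to bound the coefficient of $\mathfrak{S}_v$ in the unsigned product $\mathfrak{S}_w\,h_{m_1}(x_1,\dots,x_k)\,h_{m_2}(x_1,\dots,x_k)$. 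Dropping the subtracted (nonnegative) term is harmless for an upper bound, and it is what converts the signed Schur multiplication into a purely enumerative problem.

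Next I would invoke Sottile's Pieri formula. Multiplication by a single $h_p(x_1,\dots,x_k)$ is multiplicity–free and sums $\mathfrak{S}_u$ over saturated chains of length $p$ in the $k$-Bruhat order, each cover being a transposition $t_{ab}$ with $a\le k<b$ and $w(a)<w(b)$ that raises the length by one, subject to Sottile's monotonicity condition on the large positions $b$. Consequently $[\mathfrak{S}_v](\mathfrak{S}_w h_{m_1}h_{m_2})$ equals the number of pairs consisting of a length-$m_1$ Pieri chain from $w$ to some intermediate $u$ followed by a length-$m_2$ Pieri chain from $u$ to $v$; equivalently, the number of saturated $k$-Bruhat chains $w\to v$ of length $m_1+m_2$ that factor as a concatenation of two Pieri-admissible segments. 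Each step moves a larger value into a position $\le k$ and deposits a smaller value into a large position $b>k$, so the combinatorics is governed entirely by the sequence of large positions $(b_1,\dots,b_{m_1+m_2})$ used.

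The heart of the argument is the combinatorial bound on these chain pairs. First I would show that the large positions that can actually occur lie in $\{k+1,\dots,n_2\}$ — using $w\in S_{n_2}$ and, crucially, the hypothesis $n_2-k<m_2$, which forces each of the two segments (of lengths $m_1\ge m_2>n_2-k$) to be ``dense'' enough that every available large position is hit, while positions beyond $n_2$ can only be entered in a rigid, multiplicity-one fashion. There are therefore at most $n_2-k$ relevant large positions. Reconstructing a chain pair from the endpoints $w$ and $v$: the final values in the large positions pin down the multiset of moves, but not their order, and not the split between the two $h$-factors. For each of the $\le n_2-k$ large positions the two-factor (two-row) structure allows at most two admissible assignments across the segments $h_{m_1}$ and $h_{m_2}$, giving the base $2^{n_2-k}$; and the admissible orderings of the large positions contribute at most $(n_2-k)!$, for the overall bound $2^{n_2-k}(n_2-k)!$. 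Under the extra hypothesis $w(k+1)>w(k+2)>\cdots>w(n_2)$, the large positions are already in decreasing order, so the length-increasing (inversion-creating) requirement forces them to be processed in a unique order; the $(n_2-k)!$ ordering freedom collapses to $1$ and the bound sharpens to $2^{n_2-k}$.

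The main obstacle is precisely this last combinatorial lemma, and its difficulty is twofold. First, the intermediate permutation $u$ is not determined by $v$, so the split point and the monotone reorderings permitted inside each Pieri segment must be controlled \emph{simultaneously}; isolating the genuinely free choices (the per-position binary assignment and the global ordering) from the forced ones is delicate. Second, one must make rigorous the claim that $n_2-k<m_2$ leaves no room for a large position to be skipped within a segment, and that positions exceeding $n_2$ never inflate the count — this is where the hypothesis is really consumed. Verifying that the descent condition rigidifies the order of the large positions, thereby eliminating the factorial, is the cleaner endpoint of the same analysis.
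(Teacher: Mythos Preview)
Your reduction via Jacobi--Trudi and Sottile's Pieri rule matches the paper's setup, but you diverge at a crucial point: you discard the subtracted term $h_{m_1+1}h_{m_2-1}\mathfrak{S}_w$ and try to bound the coefficient in $h_{m_1}h_{m_2}\mathfrak{S}_w$ directly. The paper does \emph{not} do this; the subtraction is where the real work happens. The paper indexes the equivalence classes of intermediate $u\dashv h_{m_2}\mathfrak{S}_w$ by tuples $(i,j,I_1,I_2)$ (here $j$ is the number of positions in $\{k+1,\dots,n_2\}$ hit by the chain, $I_2$ records which ones and in what order, and $I_1$ is a composition encoding how the small positions $a_\ell$ repeat), and proves an injection: whenever two contributing classes have $j$-values differing by at least~$2$, the one with larger $j$ also appears, from the \emph{same} tuple, in $h_{m_1+1}h_{m_2-1}\mathfrak{S}_w$ and is therefore cancelled. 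Only tuples with $j$ in two consecutive values survive the subtraction, and it is \emph{that} residue which is bounded by $2^{n_2-k}(n_2-k)!$, or by $2^{n_2-k}$ under the descent hypothesis (which forces a unique admissible $I_2$ for each $j$).

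Your heuristic for the raw bound --- ``assign each of the $n_2-k$ large positions to one of two segments, times $(n_2-k)!$ orderings'' --- does not reflect the actual parametrisation. A position $b\in\{k+1,\dots,n_2\}$ can be used in neither segment, in one, or in both (once in the chain $w\to u$ and again in $u\to v$), and the composition datum $I_1$ adds further freedom your count ignores. The total number of equivalence classes already grows faster than $2^{n_2-k}(n_2-k)!$: it is $13$ for $n_2-k=2$ and $79$ for $n_2-k=3$, against $8$ and $48$. So even the crude class-count does not deliver your bound; one would need mutual-exclusivity arguments of the kind the paper carries out by hand for $n_2-k=2$, and you have not supplied them, nor is it clear they suffice in general. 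The paper sidesteps this entirely by keeping the cancellation you discarded.
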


In fact, we formulate the following conjecture:
\begin{conj}
\label{I: Conj 3}
    Let $n_1,n_2$ be two positive integers and $n=\max\{n_1,n_2\}$. Let $\tau$ be a $k$-Grassmannian permutation in $S_{n_1}.$ Suppose $\lambda$ is the Young diagram with $2$ rows corresponding to $\tau.$ Let $w$ be a permutation in $S_{n_2}.$ Write $\mathfrak{S}_{w}s_{\lambda}(x_1,\cdots,x_k)=\sum\limits_{v\in S_{n}}c_{ w\tau}^{v}\mathfrak{S}_{v}$. We have $0\leq c_{w \tau}^{v}\leq n_2-k.$
\end{conj}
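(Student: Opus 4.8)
The plan is to reduce the two-row case to iterated Pieri multiplications and then to control the resulting multiplicities by a non-crossing, Lindström–Gessel–Viennot type argument. By the Jacobi–Trudi identity for a two-row shape,
\[
s_{(m_1,m_2)}(x_1,\dots,x_k)=h_{m_1}h_{m_2}-h_{m_1+1}h_{m_2-1},
\]
where each $h_p=h_p(x_1,\dots,x_k)$ is a complete homogeneous symmetric polynomial in the first $k$ variables, and hence is itself the Schubert polynomial of a one-row Grassmannian permutation with descent at $k$. Sottile's Pieri formula expresses each product $\mathfrak{S}_u h_p$ as a multiplicity-free sum $\sum_{u'}\mathfrak{S}_{u'}$ over the permutations $u'$ obtained from $u$ by a length-$p$ Bruhat chain of the kind prescribed by the Pieri rule for $h_p(x_1,\dots,x_k)$. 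Composing two such steps and subtracting gives
\[
c_{w\tau}^{v}=N(m_1,m_2)-N(m_1+1,m_2-1),
\]
where $N(a,b)$ counts the factorizations $w\to w'\to v$ through two successive Pieri steps of sizes $a$ and $b$.

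The first step is to realize each ordered pair of Pieri chains as a pair of monotone lattice paths, so that the difference $N(m_1,m_2)-N(m_1+1,m_2-1)$ becomes exactly the number of non-crossing such pairs, via the standard cancellation underlying Jacobi–Trudi. I would set this up on the poset of ``$k$-truncated'' permutations, in which only the interaction between the window $[1,k]$ and the window $[k+1,n_2]$ is recorded, so that the endpoints $w$ and $v$ fix the total horizontal displacement and the only remaining freedom is the intermediate permutation $w'$. The key reduction is then purely combinatorial: for fixed $w$ and $v$, bound the number of admissible non-crossing intermediates $w'$.

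The second step, and the main obstacle, is to show that this number of admissible intermediates is at most $n_2-k$. Theorem~\ref{I: Main theorem 2} already bounds the naive (possibly crossing) count by $2^{n_2-k}(n_2-k)!$, so the entire content of the conjecture is that the non-crossing condition collapses this super-exponential bound to the linear bound $n_2-k$. I expect this to require an explicit injection, for each fixed $v$, from the set of admissible non-crossing pairs into the $n_2-k$ positions of the window $[k+1,n_2]$: heuristically, a non-crossing pair of two-step Pieri chains should be rigidly determined by the single position in that window at which the two chains separate. Making this precise and verifying injectivity is the crux of the problem.

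I would organize the whole argument as an induction on $d=n_2-k$, with the base cases $d=1$ and $d=2$ supplied by parts (2) and (3) of Theorem~\ref{I: Main theorem 1}. For the inductive step I would first reduce to the descent-monotone case $w(k+1)>w(k+2)>\cdots>w(n_2)$, where Theorem~\ref{I: Main theorem 2} already gives the sharper bound $2^{n_2-k}$; here one peels off the largest window position, relates the coefficient to the $d-1$ case, and uses the non-crossing structure to argue that at most one new admissible intermediate is created. The principal difficulty throughout is that the passage from $2^{n_2-k}$ down to $n_2-k$ is not a matter of sharpening inequalities but of exhibiting a genuine combinatorial rigidity of non-crossing Pieri chains; absent such a structural bijection the linear bound seems out of reach, which is precisely why we can only record the statement as a conjecture.
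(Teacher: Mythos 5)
The statement you are addressing is stated in the paper as a conjecture, and the paper itself offers no proof: the authors only verify it for $n_2-k\in\{0,1,2\}$ (via Theorem~\ref{I: Main theorem 1}) and report computational evidence from sagemath. Your proposal likewise does not constitute a proof, and you say so yourself: the entire content of the conjecture is concentrated in your ``second step,'' the claimed injection from admissible intermediates into the $n_2-k$ positions of the window $[k+1,n_2]$, and you explicitly leave that step as ``the crux of the problem.'' A proof plan whose decisive step is acknowledged to be missing is a gap, not a proof, so the honest verdict is that neither you nor the paper establishes the bound $c_{w\tau}^{v}\leq n_2-k$.

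Beyond that, your first step is also not as routine as you present it. The reduction $c_{w\tau}^{v}=N(m_1,m_2)-N(m_1+1,m_2-1)$ via Jacobi--Trudi and the multiplicity-freeness of each Sottile--Pieri step is correct and is exactly what the paper does. But the claim that this difference ``becomes exactly the number of non-crossing pairs via the standard cancellation underlying Jacobi--Trudi'' is not available here: the Lindstr\"om--Gessel--Viennot involution applies to Grassmannian (Schur) products, whereas $w$ is an arbitrary permutation and the Pieri chains $w\to w'\to v$ live in Bruhat order with the transposition constraints of Lemma~\ref{D: Basic product}, not on a lattice-path poset. The paper's own treatment of the subtraction in Theorems~\ref{I: Main theorem 1} and~\ref{I: Main theorem 2} is a hand-built, case-by-case injection of ``colliding'' pairs of intermediates into terms of $h_{m_1+1}h_{m_2-1}\mathfrak{S}_{w}$ (the sets $M_2\subseteq M_1$, the tables of values $u_1,u_2$ at positions $d,a,k+1,k+2$, and the classification by the index set $M$ of Definition~\ref{P:defff}), and even there the authors only reach $2^{n_2-k}(n_2-k)!$ in general. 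So before your injection into $[k+1,n_2]$ can even be attempted, you would need to construct the sign-reversing involution that justifies the non-crossing reformulation in the Schubert setting; that is a second unproved ingredient, not a standard fact you can cite.
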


We briefly outline the structure of this paper. In Section 2, we review some basic knowledge of classical Schubert calculus. Section 3 is devoted to the proof of Theorem~\ref{I: Main theorem 1}. Section 4 is devoted to the proof of Theorem~\ref{I: Main theorem 2}. We will discuss conjecture~\ref{I: Conj 3} at the end of the paper.

\section{Definitions and Preliminaries}\label{sec:prelim}

In this section, we review some basic knowledge of classical Schubert calculus. We first introduce some definitions:
\begin{defin}
\label{D:Young diagram}
For any positive integer $n,$ a \emph{Young diagram} with $n$ boxes is a partition $\lambda$ of $n,$ which is a sequence of weakly decreasing positive integers \[(\lambda_{1}\geq\lambda_{2}\geq\cdots\geq\lambda_{d}),\] such that $\sum\limits_{i=1}^{d}\lambda_{i}=n.$ A \emph{semistandard Young tableau} $T$ of shape $\lambda$ is an array $T$ of positive integers of the form $(T_{i,j}\in \mathbb{N}_{+})_{1\leq i\leq d;1\leq j\leq \lambda_{i}}$ such that 
\begin{enumerate}
    \item for any fixed $i$ and $j_{1}<j_{2},$ we have $T_{i,j_{1}}\leq T_{i,j_{2}};$
    \item for any fixed $j$ and $i_{1}<i_{2},$ we have $T_{i_{1},j}\leq T_{i_{2},j}.$
\end{enumerate}
\end{defin}
\begin{defin}
\label{D:Schur polynomial}
For any positive integer $n$, let $\lambda=(\lambda_1 \geq \lambda_2 \geq \cdots \geq \lambda_d)$ be a partition of $n$ with $d$ parts, i.e., $\sum\limits_{i=1}^{d}\lambda_{i}=n.$ Let $k$ be a positive integer greater than or equal to $d$. We define the \emph{Schur polynomial} of $\lambda$ on $k$ variables as 
\begin{equation}
    s_\lambda(x_{1},x_{2},\cdots,x_{k})=\sum\limits_T x^T=\sum\limits_T x_{1}^{t_1}\cdots x_{k}^{t_k}
\end{equation}
summing over all semistandard Young tableaux $T$ of shape $\lambda$. where $t_i$ counts the occurrences of the number $i$ in $T.$
In particular, the \emph{elementary homogeneous polynomial} and the \emph{completely homogeneous polynomial} of degree $n$ with $k$ variables for $k\geq n$ are $e_n=s_{1^n}(x_{1},x_{2},\cdots,x_{k})$ and $h_n=s_{n}(x_{1},x_{2},\cdots, x_{k}),$ respectively.
\end{defin}
Now we give the definition of Schubert polynomials:
\begin{defin}
\label{D:Schubert polynomial}
For a permutation $w\in S_n$, we define the \emph{Schubert polynomial} as follows:
\begin{itemize}
    \item If $w=w_{0},$ where $w_0$ is the longest permutation in $S_{n}$, then $\mathfrak{S}_{w_0}=x_1^{n}x_2^{n-1}\cdots x_n$
    \item Otherwise, $\partial_{i}\mathfrak{S}_{w}=\mathfrak{S}_{ws_i}$ if $w(i)>w(i+1).$
\end{itemize}
Here the divided difference operator $\partial_{i}$ is defined such that for any polynomial $f,$ we have $\partial_{i} f=\frac{f-s_{i}f}{x_{i}-x_{i+1}}$ with $s_{i}\in S_n$ acting on $f$ by swapping $x_i$ and $x_{i+1}.$

\end{defin}
\begin{remark}
\label{greek&english}
To distinguish between Young diagram and permutations of $S_n$, we use Greek letters to represent Young diagram and English letters for permutations.
\end{remark}
\begin{defin}
\label{Grassmannian}
    Let $k,n$ be two integers. A permutation $w\in S_n$ is called \emph{$k$-Grassmannian} if $w(1)<\cdots<w(k),w(k+1)<\cdots<w(n).$ 
\end{defin}
\begin{remark}
    \label{D:connection}
    Schur polynomials can be viewed as a special case of Schubert polynomials. More explicitly, for a Schur polynomial $s_{\lambda}$ of with $k$ variables corresponding to a partition $\lambda$, there exists a positive integer $n$ such that $s_{\lambda}=\mathfrak{S}_{w},$ where $w\in S_n$ is a $k$-Grassmannian permutation satisfying $w(1)<\cdots<w(k)$, $w(k+1)<\cdots<w(n)$. 
    
    Conversely, for a $k$-Grassmannian permutation $w\in S_n,$ then $\mathfrak{S}_{w}$ is the Schur polynomial $s_{\lambda}(x_1,\cdots,x_k)$ where $\lambda$ is the partition $(w(k)-k,\cdots,w(2)-2,w(1)-1).$
\end{remark}
\begin{ex}
\label{D:example 2.5}
    Let $n=5,k=3.$ The \emph{elementary homogeneous polynomial} $e_{3}(x_1,x_2,x_3)=x_1x_2x_3=\mathfrak{S}_{(2,3,4,1,5)}$ and the \emph{completely homogeneous polynomial} $h_2(x_1,x_2,x_3)=x_1^2+x_2^2+x_3^2+x_1x_2+x_1x_3+x_2x_3=\mathfrak{S}_{(1,2,5,3,4)}.$
\end{ex}

For any $u,v\in S_{n},$ we have
\[\mathfrak{S}_{u}\mathfrak{S}_{v}=\sum\limits_{w\in S_{n}}c_{uv}^{w}\mathfrak{S}_{w}.
\]
We call $c_{uv}^{w}$ \emph{the generalized Littlewood-Richardson coefficients}, which are nonnegative.

Moreover, we have the \emph{Jacobi-Trudi formula}:
\begin{lemma}\label{D:Jacobi-Trudi}
Let $\lambda=(m_1,m_2)$ be a partition of $n$ with two rows and $k$ be a positive integer larger than $1$, then the Schur polynomial of shape $\lambda$ with $k$ variables satisfies 
\[
s_\lambda(x_{1},x_{2},\cdots,x_{k})=h_{m_1}h_{m_2}-h_{m_1+1}h_{m_2-1}.
\]
Here $h_{m}$ is the completely homogeneous symmetric polynomial of degree $m$ with $k$ variables. 
\end{lemma}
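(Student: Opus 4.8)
The identity is the two-row instance of the classical Jacobi--Trudi formula $s_\lambda=\det\bigl(h_{\lambda_i-i+j}\bigr)_{1\le i,j\le d}$: for $d=2$ and $\lambda=(m_1,m_2)$ the matrix is $\left(\begin{smallmatrix} h_{m_1}&h_{m_1+1}\\ h_{m_2-1}&h_{m_2}\end{smallmatrix}\right)$, whose determinant is exactly $h_{m_1}h_{m_2}-h_{m_1+1}h_{m_2-1}$. One could simply quote the general determinantal identity and expand this $2\times 2$ determinant. I would instead give a self-contained proof via the Lindström--Gessel--Viennot (LGV) lemma, since in the two-row case the signed cancellation is completely transparent.

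First I would record the lattice-path model for $h_N$. Work in the integer grid with unit north steps $(0,1)$ and east steps $(1,0)$, restrict to heights $0,1,\dots,k-1$, and weight each east step whose lower endpoint lies at height $y$ by $x_{k-y}$. A monotone path from a source $(s,0)$ to a sink $(t,k-1)$ makes exactly $t-s$ east steps, and as we read such a path from the top downward the indices of its east-step weights form a weakly increasing word of length $t-s$ in the alphabet $\{1,\dots,k\}$; summing the weights over all such paths gives precisely $h_{t-s}(x_1,\dots,x_k)$, as the multiset of east-step heights ranges over every degree-$(t-s)$ monomial.

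Next I would place the two sources $A_1=(0,0)$, $A_2=(m_1-m_2+1,0)$ and the two sinks $B_1=(m_1,k-1)$, $B_2=(m_1+1,k-1)$, so that the number of east steps along a path $A_i\to B_j$ equals the $x$-coordinate difference; one checks these counts are $m_1,\ m_1+1,\ m_2-1,\ m_2$, making the $2\times 2$ matrix of path generating functions equal to $\left(\begin{smallmatrix} h_{m_1}&h_{m_1+1}\\ h_{m_2-1}&h_{m_2}\end{smallmatrix}\right)$. The LGV lemma identifies its determinant with the signed sum over pairs of paths $(P_1,P_2)$, where the sign-reversing involution swaps the portions of the two paths beyond their first common vertex and cancels every intersecting pair. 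Because $A_1$ lies strictly left of $A_2$ while the transposition matching routes $A_1$ to the rightmost sink $B_2$ and $A_2$ to $B_1$, every path pair realizing the transposition is forced to cross; hence only the non-intersecting pairs for the identity matching survive.

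Finally I would exhibit the weight-preserving bijection between non-intersecting pairs $(P_1,P_2)$ and semistandard tableaux of shape $(m_1,m_2)$: reading $P_1$ and $P_2$ from top to bottom produces the weakly increasing rows of lengths $m_1$ and $m_2$, and vertex-disjointness translates exactly into the column-strict inequalities $T_{1,j}<T_{2,j}$ for $1\le j\le m_2$ (the strict gap coming from the one-unit offset between the sinks). This matches the surviving sum with $\sum_T x^T=s_\lambda$, completing the proof. The main obstacle is precisely this last step: one must fix the height-to-variable convention and the source/sink coordinates so that the non-crossing condition corresponds to strict increase down columns rather than to its reverse, and confirm that the offset $m_1-m_2+1$ indeed forces every transposition pair to intersect. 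Once these conventions are pinned down, the remainder is the standard LGV bookkeeping.
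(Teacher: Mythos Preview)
The paper does not actually prove this lemma: it simply records the identity as ``the Jacobi--Trudi formula'' and moves on, treating it as a classical result to be quoted. So there is no argument in the paper to compare against.

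Your proposal is correct and goes well beyond what the paper does. You give the standard Lindstr\"om--Gessel--Viennot proof in the two-row case: the lattice-path interpretation of $h_N$, the choice of sources $A_1=(0,0)$, $A_2=(m_1-m_2+1,0)$ and sinks $B_1=(m_1,k-1)$, $B_2=(m_1+1,k-1)$ yielding the $2\times 2$ matrix $\left(\begin{smallmatrix} h_{m_1}&h_{m_1+1}\\ h_{m_2-1}&h_{m_2}\end{smallmatrix}\right)$, the observation that the transposition matching forces an intersection, and the bijection between non-intersecting pairs and semistandard tableaux of shape $(m_1,m_2)$. All of this is sound, and you correctly flag the one place where care is needed (aligning the height-to-variable convention and the one-unit sink offset so that vertex-disjointness becomes strict increase down columns). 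In the context of this paper a one-line citation of the general determinantal identity, as you mention at the outset, would already suffice and matches the paper's level of detail; the LGV argument is a welcome but optional elaboration.
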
 

For the product of a \emph{completely homogeneous symmetric polynomial} with a \emph{Schubert polynomial}, Sottile gave the following lemma as in \cite[Theorem 1]{sottile1996pieri}:
\begin{lemma}
\label{D: Basic product}
Let $k,m,n$ be positive integers, and let $w\in S_n$. Then:
    
    $$\mathfrak{S}_{w}\cdot h_m(x_1,\cdots , x_k)=\sum\limits_{v}\mathfrak{S}_{v},$$
where the sum is over all $v=wt_{a_1b_1}\cdots t_{a_mb_m}$ such that $a_i\leq k< b_i, a_i$ are in weak increasing and $\ell(wt_{a_1b_1}\cdots t_{a_ib_i})=\ell(w)+i$ for $1\leq i\leq m$ with the integers $b_1,\cdots, b_m$ distinct. Meanwhile, the condition $v=wt_{ab}$, $a\leq k<b$ and $\ell(v)=\ell(w)+1$ are equivalent to $w(a)<w(b)$ and there do not exist $a<i<b$ such that $w(a)<w(i)<w(b)$.
\end{lemma}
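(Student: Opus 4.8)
The plan is to recognize this statement as the Pieri rule for Schubert polynomials and to prove it by induction on $m$, with the case $m=1$ being Monk's formula. By Definition~\ref{D:Schur polynomial} and Remark~\ref{D:connection}, $h_m(x_1,\dots,x_k)$ is the Schubert polynomial of the $k$-Grassmannian permutation whose shape is the single row $(m)$, so the identity is a special instance of expanding a product of Schubert polynomials; it is the one-row shape that forces the clean, multiplicity-controlled answer.

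I would first dispatch the base case $m=1$. From Definition~\ref{D:Schubert polynomial} one checks $h_1(x_1,\dots,x_k)=x_1+\cdots+x_k=\mathfrak{S}_{s_k}$, and then proves Monk's formula
\[
\mathfrak{S}_w\cdot(x_1+\cdots+x_k)=\sum_{\substack{a\le k<b\\ \ell(wt_{ab})=\ell(w)+1}}\mathfrak{S}_{wt_{ab}}.
\]
The tool is the Leibniz rule $\partial_i(fg)=(\partial_i f)\,g+(s_if)(\partial_i g)$: one establishes the auxiliary expansion of $x_i\,\mathfrak{S}_w$ as a signed sum of $\mathfrak{S}_{wt_{ij}}$ over covers, by downward induction on $\ell(w_0)-\ell(w)$, and then telescopes $x_1+\cdots+x_k$, where each cover $wt_{ab}$ survives with net coefficient $[a\le k]-[b\le k]$, i.e.\ exactly when $a\le k<b$. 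The same length computation proves the second assertion of the lemma: for $a<b$ the transposition $t_{ab}$ raises length by exactly one precisely when $w(a)<w(b)$ and no intermediate index $a<i<b$ has $w(a)<w(i)<w(b)$, since each such straddling value contributes two further inversions.

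For the inductive step I would combine the expansion of $\mathfrak{S}_w\,h_{m-1}$ with Monk's formula, using a recursion expressing $h_m$ through lower complete homogeneous polynomials, to build length-$m$ saturated chains $w\lessdot_k w_1\lessdot_k\cdots\lessdot_k w_m=v$ in the $k$-Bruhat order, each step a transposition $t_{a_ib_i}$ with $a_i\le k<b_i$. The real content is to show that, after the cancellations dictated by these symmetric-function relations, every reachable $v$ occurs with coefficient exactly $1$, witnessed by the unique chain whose steps can be arranged with $a_1\le\cdots\le a_m$ and with $b_1,\dots,b_m$ distinct. This reduces to a commutation/exchange analysis: whenever two consecutive steps of a saturated chain violate the weak-increasing or distinctness conditions one can reorder them, and I would prove that each $v$ admits a unique such canonical chain, so that all non-canonical contributions cancel.

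The multiplicity-one and canonical-chain uniqueness is the crux and the main obstacle; the set of $v$ that appear is easy to describe by iterating the cover relation, but controlling overcounting demands careful bookkeeping of how distinct chains can reach a common $v$ together with a verification that the corrections built into $h_m$ annihilate every non-canonical contribution. Should the purely algebraic induction prove unwieldy, I would instead invoke the geometric argument of Sottile~\cite{sottile1996pieri}: there the coefficients count reduced points in a transverse intersection of a Schubert variety with a special Schubert variety in the flag manifold, which makes multiplicity one manifest, while the indexing chains emerge from the combinatorics of the $k$-Bruhat order.
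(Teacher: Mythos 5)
The paper does not prove this lemma at all: it is quoted verbatim from Sottile \cite[Theorem 1]{sottile1996pieri}, so there is no in-paper argument to compare against. Your fallback option --- invoking Sottile's geometric proof --- is therefore exactly what the authors do, and is the appropriate move in context. Your base case is also fine: Monk's formula for $\mathfrak{S}_w\cdot(x_1+\cdots+x_k)$ and the characterization of covers ($\ell(wt_{ab})=\ell(w)+1$ iff $w(a)<w(b)$ with no straddling intermediate value) are both standard and your reasoning for them is correct.

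The genuine gap is in your primary, inductive route. First, the recursion you lean on is not pinned down: the natural identity $h_1h_{m-1}=h_m+s_{(m-1,1)}$ introduces a non-one-row Schur term whose Schubert expansion is precisely what one does not yet know, while the alternating relation $\sum_i(-1)^ie_ih_{m-i}=0$ requires the $e$-Pieri rule as an independent input, so the induction does not close on $h$'s alone. Second, and more seriously, the ``commutation/exchange analysis'' you defer to is the entire content of the theorem. Distinct saturated chains in the $k$-Bruhat order from $w$ to a given $v$ are not in general related by reordering adjacent steps; the multiset of transpositions used can differ between chains, and the claim that the corrections built into $h_m$ annihilate every non-canonical contribution is exactly the statement Sottile had to prove (geometrically, via transversality of an intersection with a special Schubert variety) and that later combinatorial proofs establish through the transition recurrence or a delicate sign-reversing involution. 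Asserting that ``each $v$ admits a unique canonical chain, so that all non-canonical contributions cancel'' without constructing the involution is assuming the conclusion. As written, your argument proves the $m=1$ case and the cover criterion, and otherwise reduces to the citation --- which is what the paper does anyway.
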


For simplicity, we also introduce the following defintions:
\begin{defin}
    Let $m,k$ be two positive integers and $w\in S_n$ be a permutation. For a permutation $u\in S_n,$ we denote $v\dashv h_{m}\mathfrak{S}_{w}(x_1,\cdots,x_k)$ if $\mathfrak{S}_{u}$ is one summation in the expression $\mathfrak{S}_{w}\cdot h_m(x_1,\cdots , x_k)=\sum\limits_{v}\mathfrak{S}_{v}$ as in Lemma~\ref{D: Basic product}.
    
    For any two permutations $u_1,u_2\dashv h_{m}\mathfrak{S}_{w},$ we say $u_1$ is \emph{equivalent} to $u_2$ and denote by $u_1\sim u_2$ if and only if there exists $u_1=wt_{a_{11} b_{11}}\cdots t_{a_{1m}b_{1m}}$, and $u_2=wt_{a_{21} b_{21}}\cdots t_{a_{2m}b_{2m}}$ satisfying the following conditions:
    \begin{itemize}
        \item For $i=1,2$ and $1\leq j\leq m,$ $\ell(wt_{a_{i1}b_{i1}}\cdots t_{a_{ij}b_{ij}})=\ell(w)+j$ with the integers $b_{i1},\cdots, b_{im}$ distinct;
        \item For $1\leq i\leq m,$ $b_{1i}=b_{2i}$;
        \item For any $1\leq i\leq k,$ $i$ appears continuously in $\{a_{21},\cdots, a_{2m}\}$ and $a_{1i}=a_{1(i+1)}$ if and only if $a_{2i}=a_{2(i+1)}.$
    \end{itemize}
\end{defin}
\begin{ex}
\label{D:example 2.9}
    Under the same assumption as in Example~\ref{D:example 2.5}, we have $h_2(x_1,x_2,x_3)\mathfrak{S}_{(1,4,3,2)}=\mathfrak{S}_{(1, 4,6,2,3,5)}+\mathfrak{S}_{(1, 6,3,2,4,5)}+\mathfrak{S}_{(2, 4, 5, 1, 3)}+\mathfrak{S}_{(2, 5, 3, 1, 4)}.$ It can checked directly that 
    \begin{align*}
        &(1,4,6,2,3,5)=(1,4,3,2)t_{3,5}t_{3,6};\\
        &(1, 6,3,2,4,5)=(1,4,3,2)t_{2,5}t_{2,6};\\
        &(2, 4, 5, 1, 3)=(1,4,3,2)t_{1,4}t_{3,5};\\
        &(2, 5, 3, 1, 4)=(1,4,3,2)t_{1,4}t_{2,5};
    \end{align*}
    The only two equivalent permutations are $(1,4,6,2,3,5)$ and $(1,6,3,2,4,5).$ 
\end{ex}

For two permutations $u_1\sim u_2\in S_n,$ we have the following lemma:
\begin{lemma}\label{D:lemma 2.8}
    Let $m_1,m_2$ be two positive integers. Suppose $u_1, u_2\dashv h_{m_1}\mathfrak{S}_{w}$ are two equivalent permutations, then there does not exist a permutation $u'$ such that $u'\dashv h_{m_2}\mathfrak{S}_{u_{1}}, h_{m_2}\mathfrak{S}_{u_{2}}$ simultaneously if $u_1\neq u_2.$
\end{lemma}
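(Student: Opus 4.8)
The plan is to argue by contradiction. Assume $u_1\neq u_2$ are equivalent and that some $u'$ satisfies $u'\dashv h_{m_2}\mathfrak{S}_{u_1}$ and $u'\dashv h_{m_2}\mathfrak{S}_{u_2}$ simultaneously. By the definition of $\sim$, fix chains $u_1=wt_{\alpha_1 b_1}\cdots t_{\alpha_{m_1}b_{m_1}}$ and $u_2=wt_{\beta_1 b_1}\cdots t_{\beta_{m_1}b_{m_1}}$ with the same column sequence $b_1,\dots,b_{m_1}$ and the same block structure, so that the row sequences $\alpha_\bullet$ and $\beta_\bullet$ are both weakly increasing and, since $u_1\neq u_2$, differ in at least one entry. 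Fix also $h_{m_2}$-chains realizing $u'$ from $u_1$ and from $u_2$ as in Lemma~\ref{D: Basic product}. The goal is to track individual values through the two composite chains $w\to u_1\to u'$ and $w\to u_2\to u'$ until a forced coincidence $u_1=u_2$ appears.

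The first step is to record three elementary features of any single $h$-chain that follow directly from Lemma~\ref{D: Basic product}. First, since the upper indices in an $h$-chain are distinct, a value that is moved into a position $>k$ stays there for the remainder of that chain; this freezes right-hand entries once they are created. Second, since the lower indices are weakly increasing, a value occupying a left position $\rho\le k$ can be displaced only by a transposition with lower index exactly $\rho$, and after any such transposition all later lower indices are $\ge\rho$; combined with the first fact, a value that ever leaves a left position is immediately frozen in some column. Third, for each $p\le k$ the partial sum $\sum_{i\le p}v(i)$ is nondecreasing along any $h$-chain, because a valid transposition $t_{ab}$ with $a\le k<b$ strictly increases $v(a)$ and alters no other entry in positions $\le k$.

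Next I would pass to the common prefix of the two $u$-chains: if $i_0$ is the first index with $\alpha_{i_0}\neq\beta_{i_0}$, the transpositions before $i_0$ coincide and produce a common intermediate permutation $w'$, and the matching block structure forces $i_0$ to begin a new block in both chains. Replacing $w$ by $w'$, I may assume the two chains diverge at their first block; this block has a common column set $C$ and common top value $V^*=\max\{w'(b):b\in C\}$, which a within-block analysis sends to row $\alpha_{i_0}$ in the first chain and to row $\beta_{i_0}\neq\alpha_{i_0}$ in the second. By the second structural fact, $V^*$ is frozen at its row throughout the rest of the $h_{m_1}$-stage, and during the $h_{m_2}$-stage it either remains at that row or is pushed once into a single column and frozen there. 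Matching the unique position of $V^*$ in $u'$ against these two possibilities eliminates every configuration except the one in which both $h_{m_2}$-chains push $V^*$ from the left into one and the same column $b^*$.

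The main obstacle is precisely this surviving case, where tracking $V^*$ alone no longer separates the two chains. To close it I expect to use a secondary monovariant together with an induction on $m_1$: having forced $V^*$ into the common column $b^*$, one peels off the pair $(V^*,b^*)$ and the rows $\alpha_{i_0}<\beta_{i_0}$, and compares the values that must then occupy positions $\alpha_{i_0}$, $\beta_{i_0}$ and $b^*$ in $u'$ with the partial sum $\sum_{i\le \alpha_{i_0}}v(i)$, whose strict excess on the first chain immediately after the divergent block must be matched by the second chain without the second chain ever using row $\alpha_{i_0}$ to import a value as large as $V^*$. This should overdetermine the remaining entries and return an instance of the same equivalence hypothesis on fewer transpositions, allowing the induction to be iterated down to a contradiction. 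The delicate point is verifying that the peeling genuinely preserves the equivalence hypothesis, since the second $h_{m_2}$-stage need not respect the block structure created in the first stage.
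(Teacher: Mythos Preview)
Your structural observations are correct and your reduction to the first divergent block already produces everything needed for a direct contradiction; the induction you set up is unnecessary, and the ``surviving case'' you isolate is in fact already impossible.

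The missing observation is a pointwise sharpening of your third fact: along any $h$-chain the value at each individual position $p\le k$ is nondecreasing, not merely the partial sums. Indeed, a valid transposition $t_{ab}$ with $a\le k<b$ replaces the value at $a$ by a strictly larger one and leaves every other position $\le k$ alone. Hence $u'(\alpha_{i_0})\ge u_1(\alpha_{i_0})=V^*$ \emph{unconditionally}: even when $V^*$ is pushed out of row $\alpha_{i_0}$ into some column $b^*$, the value that replaces it there is strictly larger than $V^*$, not smaller.

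On the $u_2$-side, with (say) $\alpha_{i_0}<\beta_{i_0}$, position $\alpha_{i_0}$ is never used as a row in the chain $w'\to u_2$ (every row there is $\ge\beta_{i_0}$), so $u_2(\alpha_{i_0})=w'(\alpha_{i_0})<V^*$. In the chain $u_2\to u'$, whenever a transposition with lower index $\alpha_{i_0}$ occurs, position $\beta_{i_0}$ has not yet been touched (lower indices are weakly increasing) and still holds $V^*$; the covering condition in Lemma~\ref{D: Basic product} then forces the incoming value at $\alpha_{i_0}$ to be ${<}V^*$. Thus $u'(\alpha_{i_0})<V^*$, contradicting the previous paragraph.

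This is exactly the paper's argument: pick positions $i<j\le k$ with $u_1(i)=u_2(j)$, show $u_2(i)<u_2(j)$, and finish via $u'(i)\ge u_1(i)$ versus $u'(i)<u_2(j)$. No peeling, no recursion. Your plan went astray by tracking the \emph{location} of $V^*$ in $u'$ rather than the \emph{value} sitting at the fixed left position $\alpha_{i_0}$; the latter is the monovariant that closes the proof in one step.
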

\begin{proof}
    By defintion, we suppose $u_1=wt_{a_{11} b_{11}}\cdots t_{a_{1{m_1}}b_{1{m_1}}}$, and $u_2=wt_{a_{21} b_{21}}\cdots t_{a_{2{m_1}}b_{2{m_1}}}$ as in Lemma~\ref{D: Basic product} such that $b_{1i}=b_{2i}$ and $a_{1i}=a_{1(i+1)}$ if and only if $a_{2i}=a_{2(i+1)},$ for $1\leq i\leq m_{1}$. If the lemma is not true, then there exists a permutation $v\dashv h_{m_2}\mathfrak{S}_{u_{1}},h_{m_2}\mathfrak{S}_{u_{2}}$ simultaneously. Since $\{u_{1}(i):~1\leq i\leq n\}=\{u_{2}(i):~1\leq i\leq n\}$ and $u_1\neq u_2$ there exists $i, j\leq k$ such that $i\neq j$ but $u_{1}(i)=u_{2}(j)$, denoted by $t$. We can suppose $i<j$ with $i\in\{a_{11},\cdots,a_{1{m_1}}\}$ and $j\in\{a_{21},\cdots,a_{2{m_1}}\}$ by definition of equivalence. By Lemma~\ref{D: Basic product}, we have $w(i)<u_{1}(i).$ Since $u_{2}(j)=u_{1}(i)>w(i),$ if we let $u'_{2}=wt_{a_{21}b_{21}}\cdots t_{a_{2c}b_{2c}}$ with $1\leq c\leq m$ and $a_{2c}<j=a_{2(c+1)},$ then $u'_{2}(j)=w(j)<u'_{2}(b_{2(c+1)})$ and $u_{2}(j)=u'_{2}(b_{2(c+1)}).$ Moreover, if we let $u'_{1}=wt_{a_{11}b_{11}}\cdots t_{a_{1d}b_{1d}}$ with $1\leq d\leq {m_1}$ and $a_{1d}<i=a_{1(d+1)},$ then $u'_{1}(i)=w(i)<u'_{1}(b_{1(d+1)})$ and for any $i<l<b_{1(d+1)},u'_{1}(i)=w(i)<u'_{1}(b_{1(d+1)})\leq u'_{1}(l)=w(l)$ or $u'_{1}(l)=w(l)\leq u'_{1}(i)=w(i)<u'_{1}(b_{1(d+1)}).$ In particular, we have $w(j)<w(i)$ since $w(j)<u_{2}(j)=u_1(i).$ Therefore $u_{2}(i)<u_{1}(i)=u_{2}(j).$ Therefore $v(i)<u_{2}(j)$ and $u_{1}(i)\leq v(i),$ which is a contradiction. Hence we finish our proof.
\end{proof}
\begin{ex}
    Take the same assumption as in Example~\ref{D:example 2.9}. Let $m_2$ be a positive integer. Let $u_1=(1,6,3,2,4,5)$ and $u_2=(1,4,6,2,3,5).$ There does not exsit a permutation $u'$ such that $u'\dashv h_{m_2}\mathfrak{S}_{u_1},h_{m_2}\mathfrak{S}_{u_2}.$ Otherwise, suppose there exists a permutation $v\dashv h_{m_2}\mathfrak{S}_{u_{1}},h_{m_2}\mathfrak{S}_{u_{2}}$ simultaneously. We have $u_1(2)=u_2(3)=6$ and $u_2(2)=4<u_1(2).$ Since $u_2(2)<u_2(3),$ we must have $v(2)<u_2(3)=6$. It contradicts to $v(2)\geq u_{1}(2)=6.$
\end{ex}

 
\section{Proof of theorem~\ref{I: Main theorem 1}}\label{sec::non}
We first give an example to show the upper bounds in Theorem~\ref{I: Main theorem 1} can be realized.
\begin{ex}
    Let $n_1=7,n_2=7.$ Then $n=7.$ Take $w=(1,2,3,5,7,4,6)$ to be a $5$-Grassmannian permutation in $S_7.$ Then $\lambda=(2,1).$ Let $\tau=(1,2,3,5,7,4,6).$ Then $\mathfrak{S}_{\tau}s_{\lambda}=\mathfrak{S}_{(1, 2, 3, 6, 9, 4, 5, 7, 8)}+\mathfrak{S}_{(1, 2, 3, 7, 8, 4, 5, 6)}+\mathfrak{S}_{(1, 2, 4, 5, 9, 3, 6, 7, 8)}+2*\mathfrak{S}_{(1, 2, 4, 6, 8, 3, 5, 7)}+\mathfrak{S}_{(1, 2, 5, 6, 7, 3, 4)}+\mathfrak{S}_{(1, 3, 4, 5, 8, 2, 6, 7)}+\mathfrak{S}_{(1, 3, 4, 6, 7, 2, 5)}.$  
\end{ex}

\begin{remark}
\label{S:remark}
Here we make some remarks on our problems and proof:
\begin{enumerate}
    \item It can be derived from intersection theory that the \emph{generalized Littlewood-Richardson coefficients} are nonnegative. Proving this assertion combinatorially and providing a way to compute these numbers is an important problem in algebraic combinatorics. Our result provides a combinatorical proof of the case $k=n_2$. The case $k=n_2-1$ and $k=n_2-2$ can also be done in a similar way. We omit it here for simplicity.
    \item The proof of theorems avoids the cases when $m_2$ is less than $n_2-k$. However, since $n_2-k$ is at most $2$, we can verify the theorem directly.
\end{enumerate}
\end{remark}
The main idea for the proof of Theorem~\ref{I: Main theorem 1} is to use Lemma~\ref{D:Jacobi-Trudi}. Thus we need to analyze the product of $h_{m_1}h_{m_2}$ with $\mathfrak{S}_{w}$ for any $m_1$,$m_2\in \mathbb{N}_{+}$ and $m_2\leq m_1.$  We first consider the case $k=n_2.$ We have the following lemma:
\begin{lemma}\label{S:lemma 3.2}
Let $k,n_2,n,m$ be positive integers and $n>n_2.$ Let $w\in S_{n_2},u\in S_n.$ 
If $k=n_2$ and $u\dashv h_{m}\mathfrak{S}_{w}(x_1,\cdots,x_k),$ then $u=wt_{a(n_2+1)}t_{a(n_2+2)}\cdots t_{a(n_2+m)}$ for some $1\leq a\leq n_2.$
\end{lemma}
\begin{proof}
By definition, we can suppose $u=wt_{a_{1}b_{1}}\cdots t_{a_{m}b_{m}},$ for some $a_{1}\leq a_{2}\leq\cdots\leq a_{m}\leq n$ and distinct $b_{i}>n,~1\leq i\leq m.$ Since $w(a_1)<w(n+1)=n+1<w(l)=n+l$ for any $l\geq 2,$ we have $b_1=n+1.$ if $a_{1}< a_{2},$ then $w(a_{2})\leq w(a_{1})<w(b_{1})$ or $w(a_{1})<w(b_{1})\leq w(a_{2}).$ Only the first case can happen. However, this means $wt_{a_{1}b_{1}}(a_2)<wt_{a_{1}b_{1}}(b_1)<wt_{a_{1}b_{1}}(b_{2}),$ which is a contradiction. Hence $a_1=a_2.$ Thus we finish our proof by induction. 
\end{proof}
\begin{ex}
    Let $m=2,k=3$ and $w=(1,3,2).$ Suppose $u\dashv h_{m}\mathfrak{S}_{w}.$ Then $u=(1,5,2,3,4)=wt_{2,4}t_{2,5}$ or $u=(1,3,4,2,5)=wt_{3,4}t_{3,5}.$
\end{ex}

\begin{prop}\label{S:prop 3.3}
Let $k$,$m_1$,$m_2\in \mathbb{Z}_{>0}$ and $w\in S_k$. Suppose $h_{m_1}h_{m_2}\mathfrak{S}_{w}(x_1,\cdots,x_k)=\sum c_v\mathfrak{S}_{v}$, then $c_{v}\in\{0,1\}$.
\end{prop}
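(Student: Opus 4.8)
The plan is to analyze the product $h_{m_1}h_{m_2}\mathfrak{S}_w$ by iterating Sottile's Pieri formula (Lemma~\ref{D: Basic product}). Since $w \in S_k$, I would first multiply by $h_{m_2}$ to obtain intermediate permutations $u \dashv h_{m_2}\mathfrak{S}_w(x_1,\dots,x_k)$, and then multiply each resulting $\mathfrak{S}_u$ by $h_{m_1}$. The coefficient $c_v$ then counts the number of chains $w \to u \to v$ realizing a fixed target $v$ in two steps. Because $w$ lives in $S_k$ and we multiply on exactly $k$ variables, Lemma~\ref{S:lemma 3.2} applies at each stage: any single $h_m$-multiplication sends $w$ to $wt_{a(k+1)}t_{a(k+2)}\cdots t_{a(k+m)}$ for a \emph{single} index $a \le k$. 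So the first step $w \to u$ is determined by one value $a_1$, and then the second step $u \to v$ is determined by one value $a_2$; the whole chain is parametrized by the pair $(a_1, a_2)$.

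The key claim to establish is that for a fixed target $v$, there is \emph{at most one} such pair $(a_1,a_2)$, which forces $c_v \in \{0,1\}$. To show uniqueness, I would examine how the two transpositions act on the positions $\le k$ versus the new positions $> k$. Multiplying $\mathfrak{S}_w$ by $h_{m_2}$ (with $w\in S_k$) moves the value $w(a_1)$ out to the newly created large positions and shifts the window; then $h_{m_1}$ acts on the resulting permutation. The essential point is that the final permutation $v$ records enough information to reconstruct which column $a_1$ was chosen first and which column $a_2$ was chosen second: roughly, the positions $\le k$ of $v$ and the ordered pattern of which entries were "promoted" in each stage pin down the pair uniquely. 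Lemma~\ref{D:lemma 2.8} on equivalent permutations is the right tool here — if two distinct chains produced the same $v$, the two intermediate permutations $u, u'$ would have to be equivalent yet admit a common continuation, contradicting the lemma.

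Concretely, I would argue as follows. Suppose $v$ arises from two chains $w \to u \to v$ and $w \to u' \to v$ with $u \ne u'$. By Lemma~\ref{S:lemma 3.2}, $u = wt_{a_1(k+1)}\cdots t_{a_1(k+m_2)}$ and $u' = wt_{a_1'(k+1)}\cdots t_{a_1'(k+m_2)}$ for single indices $a_1 \ne a_1'$. I would then check that $u$ and $u'$ are equivalent in the sense of the definition preceding Lemma~\ref{D:lemma 2.8} (they agree on the large $b$-indices $k+1,\dots,k+m_2$ and each uses a single repeated column index, so the continuity and coincidence conditions hold automatically). Since $v \dashv h_{m_1}\mathfrak{S}_u$ and $v \dashv h_{m_1}\mathfrak{S}_{u'}$ simultaneously, Lemma~\ref{D:lemma 2.8} gives a contradiction. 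Hence $u = u'$, and since each single-stage multiplication produces a given target at most once (the column index $a$ is recovered from $v$), the chain is unique and $c_v \le 1$.

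The main obstacle I anticipate is verifying that two distinct intermediate permutations $u, u'$ genuinely satisfy the formal equivalence conditions so that Lemma~\ref{D:lemma 2.8} can be invoked, rather than merely being "morally" equivalent. In particular I must confirm the third bullet of the equivalence definition — that each index appears \emph{continuously} among the transposition columns and that the pattern of repetitions matches — which is immediate here because each $u$ uses the single column $a_1$ repeated $m_2$ times, but this special structure (inherited from $w \in S_k$ via Lemma~\ref{S:lemma 3.2}) is exactly what makes the argument work and needs to be stated carefully. A secondary subtlety is handling the case where $a_1 = a_1'$ but the chains still differ at the second stage; there I would again use Lemma~\ref{S:lemma 3.2} applied to $u$ to see that the second transposition index is forced, closing that case as well.
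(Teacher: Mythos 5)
Your proposal is correct and follows essentially the same route as the paper's proof: Lemma~\ref{S:lemma 3.2} shows that any two distinct intermediates $u\neq u'$ with $u,u'\dashv h_{m_2}\mathfrak{S}_{w}$ are equivalent (same $b$-indices $k+1,\dots,k+m_2$, a single repeated column each), and Lemma~\ref{D:lemma 2.8} then forbids a common continuation, so $c_v\le 1$. One peripheral claim should be dropped: Lemma~\ref{S:lemma 3.2} does \emph{not} apply at the second stage (the intermediate $u$ no longer lies in $S_k$), but this is not needed, since Sottile's formula is multiplicity-free over target permutations and hence the ``$a_1=a_1'$ with differing second stages'' case never contributes more than $1$.
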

\begin{proof}
By Lemma~\ref{D:lemma 2.8} and Lemma~\ref{S:lemma 3.2} we know that for any two permutations $u_1,u_2\dashv h_{m_2}\mathfrak{S}_{w},$ there does not exist a permutation $v\dashv h_{m_1}\mathfrak{S}_{u_1},h_{m_1}\mathfrak{S}_{u_2}$ simultaneously. Hence the assertion follows.
\end{proof}

Now we give the proof of Theorem~\ref{I: Main theorem 1}(1) for $k=n_2$:
\begin{proof}[Proof of Theorem~\ref{I: Main theorem 1}(1)]
    Suppose the Young diagram with $2$ rows $\lambda=(m_1,m_2).$ Then by Lemma~\ref{D:Jacobi-Trudi}, we have $\mathfrak{S}_{w}s_{\lambda}=h_{m_1}h_{m_2}\mathfrak{S}_{w}-h_{m_1+1}h_{m_2-1}\mathfrak{S}_{w}.$
    By Proposition~\ref{S:prop 3.3}, we have $c_{w\tau}^{v}\leq 1$ for any permutation $v\in S_n$. Now we show the coefficients are all nonnegative. 
    
    We first define two sets as follows:
\begin{align*}
    M_{1} &=\{v\in S_{n}:~ v\dashv h_{m_1}h_{m_2}\mathfrak{S}_{w}\}\\
    M_{2} &=\{v\in S_{n}:~ v\dashv h_{m_1-1}h_{m_2+1}\mathfrak{S}_{w}\}.
\end{align*}

We claim that $M_2\subseteq M_1$ and thus the coefficients are all nonnegative.

For any $v\in M_{2},$ suppose $v\dashv h_{m_1+1}\mathfrak{S}_{u}$ for $u=wt_{a(n_2+1)}t_{a(n_2+2)}\cdots t_{a(n_2+m_2-1)}\dashv h_{m_2-1}\mathfrak{S}_{w}$ satisfying the equation $v=ut_{a_1c_1}\cdots t_{a_{m_1+1}c_{m_1+1}}$. 

If there exists some $i$ satisfying $a_i=a,$ then we take $i$ to be the minimal number such that $a_i=a.$ We have $c_i=n_2+m_2.$ Let $u'=wt_{a(n_2+1)}t_{a(n_2+2)}\cdots t_{a(n_2+m_2-1)}t_{ac_{i}}$ and $v'=u^{'}t_{a_1c_1}\cdots t_{a_{i-1}c_{i-1}}t_{a_{i+1}c_{i+1}}\cdots t_{a_{m_1+1}c_{m_1+1}}=v.$ 

If such $i$ does not exist, there must be some $i_0$ such that $c_{i_0}=n_2+m_2.$ Otherwise, for each $i,$ $n_2<c_i<n_2+m_2,$ which contradicts to $m_2\leq m_1<m_1+1.$ Let $a'=a_{i_0}.$
If $a'>a$, we have $v=ut_{a'(n+2+1)}t_{a'(n_2+2)}\cdots t_{a'(n_2+m_1+1)}.$ Let $u'=wt_{a'(n_2+1)}t_{a'(n_2+2)}\cdots t_{a'(n_2+m_2)}$ and $v'=u't_{a(n_2+2)}t_{a(n_2+3)}\cdots t_{a(n_2+m_2)}t_{a'(n_2+m_2+1)}\cdots t_{a'(n_2+m_1+1)}=v.$ If $a'<a,$ we have  $v=ut_{a'(n_2+1)}t_{a'(n_2+2)}\cdots t_{a'(n_2+m_1+1)}$ or $v=ut_{a'(n_2+2)}t_{a'(n_2+3)}\cdots t_{a'(n_2+m_1+2)}.$ The first case can be done similarly. For the second case, let $u'=wt_{a'(n_2+1)}t_{a'(n_2+2)}\cdots t_{a'(n_2+m_2)}$ and $v=u't_{a(n_2+1)}t_{a(n_2+2)}\cdots t_{a(n_2+m_2)}t_{a(n_2+m_2+1)}\cdots t_{a(n_2+m_1)}.$ Thus we get $M_2\subseteq M_1.$ Hence we finish our proof.
\end{proof}
Here is an example to demonstrate the philosophy of our proof of the nonnegativity:
\begin{ex}
    Let $m_1=3,m_2=2,k=3$ and $w=(1,3,2).$ 
    Let $u=(1,4,2,3)=wt_{2,4}\dashv h_{1}(x_1,x_2,x_3)\mathfrak{S}_{w}.$ 
    
    For $v=(1,7,3,2,4,5,6)=ut_{2,5}t_{2,6}t_{2,7}t_{3,4},$ we have $u'=(1,5,2,3,4)=wt_{2,4}t_{2,5}$ and $v=u't_{2,6}t_{2,7}t_{3,4}.$

    For $v=(1,4,7,2,3,5,6)=ut_{3,4}t_{3,5}t_{3,6}t_{3,7},$ we have $u'=(1,3,5,2,4)=wt_{3,4}t_{3,5}$ and $v=u't_{2,5}t_{3,6}t_{3,7}.$
\end{ex}

Now we consider the case $k=n_2-1.$ We have the following lemma:
\begin{lemma}\label{S:lemma 3.5}
Let $k,m,n_2,n$ be positive integers and $n>n_2.$ Let $w\in S_{n_2},u\in S_n.$ If $k+1=n_2$ and $u\dashv h_{m}\mathfrak{S}_{w}(x_1,\cdots,x_k),$ then there are three equivalent classes of permutations of $u:$
\begin{itemize}
    \item Case $1$: $u=wt_{a(k+2)}t_{a(k+3)}\cdots t_{a(k+m+1)}$ for some $1\leq a\leq k$;
    \item Case $2$: $u=wt_{a(k+1)}t_{a(k+2)}\cdots t_{a(k+m)}$ for some $1\leq a\leq k$;
    \item Case $3$: $u=wt_{a_{1}(k+1)}t_{a_{2}(k+2)}\cdots t_{a_{2}(k+m)}$ for some $1\leq a_1\neq a_2\leq k.$ We do not require $a_1\leq a_2$ here.
\end{itemize}
\end{lemma}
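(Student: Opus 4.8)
The plan is to classify the permutations $u \dashv h_m\mathfrak{S}_w(x_1,\dots,x_k)$ when $k = n_2 - 1$ by a case analysis based on which transpositions involve the first ``free'' column, i.e.\ the position $k+1 = n_2$. Recall from Lemma~\ref{D: Basic product} that $u = wt_{a_1 b_1}\cdots t_{a_m b_m}$ with $a_i \le k < b_i$, the $a_i$ weakly increasing, the $b_i$ distinct, and each partial product increasing the length by exactly one. The crucial structural fact, as in the proof of Lemma~\ref{S:lemma 3.2}, is that because $w \in S_{n_2}$ the values $w(k+1), w(k+2) = n_2+1, \dots$ in positions beyond $n_2$ form a strictly increasing run of ``large'' values; the only genuinely small value available for a transposition into the tail is $w(k+1)$ itself.

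First I would run the same forcing argument as in Lemma~\ref{S:lemma 3.2}: since for $l \ge 2$ we have $w(n_2 + l) = n_2 + l > w(n_2+1)$, any transposition $t_{a_i b_i}$ with $b_i > n_2 + 1$ must have its $a_i$ equal to the preceding one (otherwise one produces an inversion pattern forbidden by the length-additivity and the ``no intermediate value'' condition of Lemma~\ref{D: Basic product}). Consequently, apart from the at most one transposition that can target column $k+1 = n_2$, every other transposition must cascade through the same row index $a$ and fill columns $n_2+1, n_2+2, \dots$ consecutively. This reduces the analysis to two sub-questions: whether any $b_i$ equals $k+1$, and if so, whether its row index coincides with or differs from the row index used by the cascade into the higher columns. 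These three possibilities are exactly Cases~1, 2, and~3.

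Next I would verify that each of the three forms does occur and that they exhaust all possibilities. If no $b_i$ equals $k+1$, then all of the $b_i$ lie in $\{k+2,\dots,k+m+1\}$ and the cascade argument forces $u = wt_{a(k+2)}\cdots t_{a(k+m+1)}$, which is Case~1. If some $b_i = k+1$, then by the weak-increase condition on the $a_i$ it must be $b_1 = k+1$ with row index $a_1$; the remaining transpositions fill $k+2,\dots,k+m$. Here the cascade argument forces a common row index $a_2$ for those remaining transpositions, and we split according to whether $a_2 = a_1$ (giving Case~2, a single cascade $u = wt_{a(k+1)}\cdots t_{a(k+m)}$) or $a_2 \ne a_1$ (giving Case~3). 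The claim that these are \emph{equivalence} classes then follows by checking the three bullet conditions in the definition of $\sim$: the $b$-indices are determined as the full run $k+1,\dots,k+m$ (or the shifted run in Case~1), the row-continuity condition is automatic from the cascade structure, and the coincidence-of-consecutive-$a_i$ pattern matches within each case.

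The main obstacle will be the rigorous treatment of Case~3 and the verification that it is genuinely a single equivalence class rather than several. Because we do not require $a_1 \le a_2$, one must be careful that the resulting word $wt_{a_1(k+1)}t_{a_2(k+2)}\cdots t_{a_2(k+m)}$ is still a valid reduced-type factorization satisfying all the hypotheses of Lemma~\ref{D: Basic product} — in particular that $a_1 > a_2$ does not violate weak monotonicity when re-sorted, and that length-additivity survives. I expect to resolve this by commuting the column-$(k+1)$ transposition past the cascade and re-examining the ``no intermediate value'' condition at the moment $w(k+1)$ is moved, showing that the only constraint is $a_1 \ne a_2$ with both at most $k$. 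Once the three normal forms are pinned down, identifying them with equivalence classes under $\sim$ is a direct check against the definition, and I would close the lemma there.
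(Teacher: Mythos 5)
Your plan follows essentially the same route as the paper's (very terse) proof: the cascade argument of Lemma~\ref{S:lemma 3.2} forces all transpositions into columns $\geq k+2$ to share one row index, leaving only the single possible transposition into column $k+1$ free, and the three cases are exactly whether that transposition is absent, shares the cascade's row index, or uses a different one. One small caution: your claim that a transposition with $b_i=k+1$ must be the first one ``by the weak-increase condition on the $a_i$'' is not a valid inference (the $b_i$ carry no ordering constraint), but this is harmless because, as you note at the end, the column-$(k+1)$ transposition is disjoint from the cascade and can be commuted to the front, which is precisely why Case~3 drops the requirement $a_1\leq a_2$.
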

\begin{proof}
Suppose $u\in S_n$ satisfies $u\dashv h_{m}\mathfrak{S}_{w}.$ By definition, we suppose that $u=wt_{a_{1}b_{1}}\cdots t_{a_{m}b_{m}},$ for some $a_{1}\leq a_{2}\leq\cdots\leq a_{m}\leq n$ and distinct $b_{i}>n,~1\leq i\leq m.$ If $a_1=\cdots=a_m,$ then  $u=wt_{a(k+2)}t_{a(k+3)}\cdots t_{a(k+m+1)}$ or $u=wt_{a(k+1)}t_{a(k+2)}\cdots t_{a(k+m)},$ since $w(i)$ is strictly increasing for $i\geq k+1=n.$ Otherwise, we must have $a_2=\cdots=a_m$ and $u=wt_{a_{1}(k+1)}t_{a_{2}(k+2)}\cdots t_{a_{2}(k+m)}$ by Lemma~\ref{D: Basic product}.
Hence we finish the proof.
\end{proof}
\begin{ex}
    Let $m=3,k=2$ and $w=(1,3,2).$ Suppose $u\dashv h_{3}\mathfrak{S}_{w}.$ Then $u=(1,6,2,3,4,5)=wt_{2,4}t_{2,5}t_{2,6}$ or $u=(5,3,1,2,4)=wt_{1,3}t_{1,4}t_{1,5}$ or $u=(2,5,1,3,4)=wt_{1,3}t_{2,4}t_{2,5}.$
\end{ex}

\begin{lemma}\label{S:lemma 3.6}
Let $m_1,m_2$ be two positive integers. Let $u_1$, $u_2\dashv h_{m_2}\mathfrak{S}_{w}$ be the permutations in Case $2,3$ repectively. Then there does not exist a permutation $v$ such that $v\dashv h_{m_1}\mathfrak{S}_{u_1}, h_{m_1}\mathfrak{S}_{u_2}$ simultaneously.
\end{lemma}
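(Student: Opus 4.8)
The goal of Lemma~\ref{S:lemma 3.6} is to show that a Case $2$ permutation $u_1=wt_{a(k+1)}t_{a(k+2)}\cdots t_{a(k+m_2)}$ and a Case $3$ permutation $u_2=wt_{a_1(k+1)}t_{a_2(k+2)}\cdots t_{a_2(k+m_2)}$ can never produce a common summand after multiplication by $h_{m_1}$. My plan is to follow the strategy already used in Lemma~\ref{D:lemma 2.8}: since $u_1$ and $u_2$ are obtained from $w$ by moving entries into positions $>n_2$, the multiset of one-line values of $u_1$ and $u_2$ is the same as that of $w$ together with the newly shifted values, but they are distributed differently among the columns $\leq k$. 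The engine of the contradiction is that multiplication by $h_{m_1}$ via Lemma~\ref{D: Basic product} can only \emph{increase} the value $v(i)$ in each position $i\leq k$ (more precisely $u(i)\le v(i)$ for all positions that get acted on, and values in positions $\le k$ are never decreased), so if the two starting permutations disagree on which small column carries a large value, no common $v$ can dominate both.

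First I would record the precise one-line descriptions of $u_1$ and $u_2$. For $u_1$, the entry in column $a$ is pushed all the way out and $u_1(a)=w(a)$ while the large values $w(k+1),\dots$ get redistributed; concretely $u_1(a)$ equals the original $w(a)$ and the columns other than $a$ among $1,\dots,k$ retain their $w$-values. For $u_2$, the first transposition $t_{a_1(k+1)}$ raises column $a_1$ and then $t_{a_2(k+2)}\cdots t_{a_2(k+m_2)}$ repeatedly raises column $a_2$. The key structural point is that in $u_1$ exactly one column $\le k$ has been ``opened up'' and refilled, whereas in $u_2$ two distinct columns $a_1\ne a_2$ have been altered. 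I would then locate a position $i\le k$ where $u_1$ and $u_2$ carry the same value $t$ but in the two permutations this common value arises with $u_1(i)$ and $u_2(i)$ standing in an order that is incompatible with simultaneous domination, exactly as in the final lines of the proof of Lemma~\ref{D:lemma 2.8}.

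The concrete mechanism I would invoke is the inequality extracted in Lemma~\ref{D:lemma 2.8}: if $v\dashv h_{m_1}\mathfrak{S}_{u_1}$ and $v\dashv h_{m_1}\mathfrak{S}_{u_2}$, then for each position $i\le k$ we have $u_1(i)\le v(i)$ and $u_2(i)\le v(i)$, and moreover whenever a column is \emph{not} acted upon its value is preserved. Since $a_1\ne a_2$, one checks that there is a column, say the one indexed by $a_1$ (or $a$), where $u_2$ has been raised by the single transposition $t_{a_1(k+1)}$ while $u_1$ leaves a \emph{larger} value sitting in that same column (because in $u_1$ the mass was concentrated on the single column $a$). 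This gives a value $t=u_2(j)$ with $u_1(i)=t$ for distinct $i,j\le k$, together with $u_2(i)<u_1(i)=u_2(j)$, and then the Sottile condition that $b$-indices are distinct forces $v(i)<u_2(j)$ while domination forces $v(i)\ge u_1(i)=u_2(j)$, a contradiction. This is essentially a case-specific instantiation of Lemma~\ref{D:lemma 2.8}, and indeed I expect the cleanest writeup to simply verify that a Case $2$ and a Case $3$ permutation are equivalent in the sense of Definition preceding Lemma~\ref{D:lemma 2.8} is \emph{false}, and then apply a comparison argument of the same flavor directly.

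The main obstacle will be bookkeeping the order of the small-column values carefully enough to exhibit the crossing pair $(i,j)$ in full generality, since Case $3$ allows either $a_1<a_2$ or $a_1>a_2$, and the position of the ``extra'' raised column relative to $a$ changes which inequality one reads off. I would handle this by splitting into the subcases $a_1<a_2$ and $a_1>a_2$ and, within each, comparing against the Case $2$ column $a$; in every subcase the two altered columns of $u_2$ cannot both match the single altered column of $u_1$, so at least one small column forces the incompatible inequality $u_1(i)>u_2(i)$ at a shared value. Once that pair is produced, the contradiction is immediate from the distinctness of the $b$-indices in Lemma~\ref{D: Basic product} exactly as in the proof of Lemma~\ref{D:lemma 2.8}, so the heart of the argument is entirely in the combinatorial location of the offending column.
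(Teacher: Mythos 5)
Your overall strategy (use the fact that multiplication by $h_{m_1}$ via Lemma~\ref{D: Basic product} only raises the values in columns $\le k$, then exhibit a ``crossing pair'' of small columns on which $u_1$ and $u_2$ place a common value in incompatible positions) is exactly the mechanism the paper uses for the first half of its proof, namely to force the repeated column $a_2$ of the Case~$3$ permutation to coincide with the column $a$ of the Case~$2$ permutation: if $a\ne a_2$, the maximal value $w(k+m_2)$ sits in column $a$ of $u_1$ but in column $a_2$ of $u_2$, and domination from one side together with the no-intermediate-value condition from the other yields the contradiction you describe. Up to relabelling your subcase split (the relevant dichotomy here is $a<a_2$ versus $a>a_2$, not $a_1<a_2$ versus $a_1>a_2$), this part of your plan is sound.

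The gap is the remaining case $a=a_2$, which your proposal does not address and on which your key claim (``at least one small column forces the incompatible inequality $u_1(i)>u_2(i)$ at a shared value'') fails. When $a=a_2$, both $u_1$ and $u_2$ put the same maximal value $w(k+m_2)$ in the same column $a$, and the only column $\le k$ on which they differ is $a_1$, where $u_1(a_1)=w(a_1)$ and $u_2(a_1)=w(k+1)>w(a_1)$: the inequality points the wrong way for a domination argument (domination only gives the consistent lower bound $v(a_1)\ge\max\{w(a_1),w(k+1)\}$), and the shared value $w(k+1)$ sits in column $k+2>k$ of $u_1$, so there is no crossing pair of small columns at all. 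The paper disposes of this case by a different mechanism: it shows that with $a=a_2$ the Sottile chain conditions defining $u_1$ and $u_2$ as summands of $h_{m_2}\mathfrak{S}_{w}$ are already jointly inconsistent (for instance, $u_1$ forces $w(a)<w(k+1)<w(k+2)$, while the second transposition in the chain for $u_2$, which has $a_1$ or $k+1$ as an intermediate position carrying the value $w(k+1)$ or $w(a_1)$, forbids exactly this), split according to $a_1<a_2$ or $a_1>a_2$. This is an argument about $w$ alone, independent of $v$, and some such argument is needed to complete your proof.
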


\begin{proof}
Suppose there exists a permutation $v$ such that $v\dashv h_{m_1}\mathfrak{S}_{u_1}, h_{m_1}\mathfrak{S}_{u_2}$ simultaneously. Suppose  $u_1=wt_{a(k+1)}t_{a(k+2)}\cdots t_{a(k+m_2)}$ and $u_2=wt_{a_{1}(k+1)}t_{a_{2}(k+2)}\cdots t_{a_{2}(k+m_2)}$. If $a\neq a_2,$ we suppose $a>a_2.$ Then $u_1(a_2)<u_2(a_2)=u_1(a).$ Thus $v(a_2)<u_1(a),$ which is a contradiction. Hence $a_2=a.$ Moreover, we have $u_1(a_1)=w(a_1), u_1(k+1)=w(a), u_1(k+2)=w(k+1)$ and $u_2(a_1)=w(k+1), u_2(k+1)=w(a_1), u_2(k+2)=w(a_2)=w(a).$ More explicitly, we have:
\begin{center}
\begin{tabular}{|c|c|c|c|}
\hline
     & $a_1$& $k+1$ & $k+2$ \\
     \hline
     $u_1$& $w(a_1)$& $w(a)$ &$w(k+1)$\\
     \hline
     $u_2$&$w(k+1)$&$w(a_1)$&$w(a)$\\
     \hline
\end{tabular}
\end{center}
From $u_1,$ we have $w(a)<w(k+1)$ and $w(k+1)<w(k+2)$ From $u_2,$ we have $w(a_1)<w(k+1).$ 

If $a_1<a_2,$ we have $w(a_2)<w(a_1)<w(k+1)$ and $w(a_2)=wt_{a_1(k+1)}(a_2)<wt_{a_1(k+1)}(k+2)=w(k+2)<wt_{a_1(k+1)}(k+1)=w(a_1).$ This is a contradiction. If $a_1>a_2,$ we have $w(k+1)<w(a_2)<w(k+2).$ This is also a contradiction. Hence we finish our proof.
\end{proof}
By the above lemma, we can obtain the following corollary directly:
\begin{cor}
Let $k$,$m_1$,$m_2\in \mathbb{Z}_{>0}$ and $w\in S_{k+1}$. Suppose $h_{m_1}h_{m_2}\mathfrak{S}_{w}=\sum c_v\mathfrak{S}_{v}$ in k variables, then $c_{v}\in\{0,1,2\}$.
\end{cor}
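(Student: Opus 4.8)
The plan is to read off $c_v$ as a counting quantity and then bound that count using Lemmas~\ref{S:lemma 3.5}, \ref{S:lemma 3.6} and \ref{D:lemma 2.8}. Since Sottile's Pieri rule (Lemma~\ref{D: Basic product}) produces a multiplicity-free sum, we have $h_{m_2}\mathfrak{S}_w(x_1,\dots,x_k)=\sum_{u\,\dashv\, h_{m_2}\mathfrak{S}_w}\mathfrak{S}_u$, and applying $h_{m_1}$ once more yields
\[
h_{m_1}h_{m_2}\mathfrak{S}_w
=\sum_{u\,\dashv\, h_{m_2}\mathfrak{S}_w}\ \sum_{v\,\dashv\, h_{m_1}\mathfrak{S}_u}\mathfrak{S}_v .
\]
Consequently $c_v$ equals the number of distinct permutations $u\dashv h_{m_2}\mathfrak{S}_w$ for which $v\dashv h_{m_1}\mathfrak{S}_u$. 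The entire task therefore reduces to showing that at most two such $u$ can feed into any fixed $v$.

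First I would invoke Lemma~\ref{S:lemma 3.5}, which sorts all $u\dashv h_{m_2}\mathfrak{S}_w$ into the three families Case $1$, Case $2$ and Case $3$. Within each family the permutations share a common $b$-sequence (namely $k+2,\dots,k+m_2+1$ in Case $1$ and $k+1,\dots,k+m_2$ in Cases $2$ and $3$) and a common coincidence pattern among the $a$-indices (all equal in Cases $1$ and $2$; first index isolated, the rest equal, in Case $3$), so they are pairwise equivalent under the relation $\sim$. Lemma~\ref{D:lemma 2.8} then guarantees that two \emph{distinct} equivalent permutations never admit the same $v$ as an $h_{m_1}$-descendant, so at most one $u$ from each of the three families contributes to a given $v$.

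Next I would apply Lemma~\ref{S:lemma 3.6} to forbid simultaneous contributions from Cases $2$ and $3$: it states precisely that a Case-$2$ permutation $u_1$ and a Case-$3$ permutation $u_2$ share no common $v\dashv h_{m_1}\mathfrak{S}_{u_1},h_{m_1}\mathfrak{S}_{u_2}$. Combining the two inputs, the contributions to $c_v$ come from at most one permutation in Case $1$ together with at most one permutation drawn from Case $2$ or Case $3$ but never both. Hence $c_v\le 2$, and since $h_{m_1}h_{m_2}\mathfrak{S}_w$ is manifestly a nonnegative combination of Schubert polynomials we also have $c_v\ge 0$, giving $c_v\in\{0,1,2\}$.

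The main obstacle is the bookkeeping underlying the first two steps. One must confirm that the three families of Lemma~\ref{S:lemma 3.5} are genuinely single $\sim$-classes, so that Lemma~\ref{D:lemma 2.8} really collapses each family to a single contributor, and that no contribution to $v$ slips outside this trichotomy. This verification is the delicate point, as it requires matching the definition of $\sim$ (equal $b$-sequences $b_{1i}=b_{2i}$ together with identical equality patterns of the $a$-indices) against the explicit reduced words listed in the three cases; once that is in place, Lemmas~\ref{D:lemma 2.8} and \ref{S:lemma 3.6} combine mechanically to produce the bound $2$.
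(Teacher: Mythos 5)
Your argument is correct and is exactly the derivation the paper intends: the corollary is stated as following ``directly'' from Lemma~\ref{S:lemma 3.6}, combined with the classification in Lemma~\ref{S:lemma 3.5} and the exclusion of same-class pairs via Lemma~\ref{D:lemma 2.8}, yielding at most one contributor from Case $1$ and at most one from Cases $2$ and $3$ combined. Your write-up simply makes explicit the bookkeeping the paper leaves implicit.
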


\begin{proof}[Proof of Theorem~\ref{I: Main theorem 1}(2)]
Suppose the Young diagram with $2$ rows $\lambda=(m_1,m_2).$ Then by Lemma~\ref{D:Jacobi-Trudi}, we have $\mathfrak{S}_{w}s_{\lambda}=h_{m_1}h_{m_2}\mathfrak{S}_{w}-h_{m_1+1}h_{m_2-1}\mathfrak{S}_{w}.$ To show $c^{v}_{w\tau}\leq 1,$ it suffices to consider $v\in h_{m_1}h_{m_2}\mathfrak{S}_{w}$ such that there are two different permutations $u_1,u_2\dashv h_{m_2}\mathfrak{S}_{w}$ and $v\dashv h_{m_1}\mathfrak{S}_{u_1},h_{m_2}\mathfrak{S}_{u_2}$ simultaneously. By lemma~\ref{D:lemma 2.8}, $u_1,u_2$ are in different cases in Lemma~\ref{S:lemma 3.5}.
By Lemma~\ref{S:lemma 3.6}, we can suppose $u_1$ is in Case $1.$ Let $ v=u_1t_{a_{1}b_1}t_{a_{2}b_2}\cdots t_{a_{m_1}b_{m_1}}$ and $u_1=wt_{a(k+2)}t_{a(k+3)}\cdots t_{a(k+m_2+1)}.$ Moreover, we suppose $v=u_2t_{a'_{1}b'_2}t_{a'_{2}b'_2}\cdots t_{a'_{m_1}b'_{m_1}}$ and $u_2=wt_{c(k+1)}t_{d(k+2)}\cdots t_{d(k+m_2)}.$ If $c=d,$ then $u_2$ is in Case $2.$ Otherwise it is in Case $3.$ 

If $d=a,$ we have $b_i\neq k+m_2+1$ for $1\leq i\leq m_1.$ Otherwise, suppose $b_{i_0}=k+m_2+1$ for $1\leq i_0\leq m_1,$ we have $a_{i_0}\neq a$ since $u_1(a)>u_1(k+m_2+1)=k+m_2.$ This contradicts to $u_2(d)=k+m_2.$ Therefore, we have $v\dashv h_{m_1+1}h_{m_2-1}\mathfrak{S}_{w}$ by $v=u_1't_{a(k+m_2+1)}t_{a_{1}b_1}t_{a_{2}b_2}\cdots t_{a_{m_1}b_{m_1}}$ and $u_1'=wt_{a(k+2)}t_{a(k+3)}\cdots t_{a(k+m_2)}.$

If $d\neq a,$ we have $d<a.$ Otherwise we have $u_2(a)<u_1(a)=w(k+m_2+1)$ and $u_2(d)=w(k+m_2).$ Thus $v(a)<u_2(d)$ from $u_2$ and $v(a)\geq u_1(a)$ from $u_1,$ which is a contradiction. If $c=d,$ we have $u_2(k+1)=w(d), u_2(d)=w(k+m_2), u_2(k+2)=w(k+1).$ More explicitly, we have:

\begin{center}
    \begin{tabular}{|c|c|c|c|c|}
\hline 
    & $d$ & $a$ & ${k+1}$ & ${k+2}$ \\
\hline
	${u_1}$ & ${w(d)}$ & ${w(k+m_2+1)}$ & ${w(k+1)}$ & ${w(a)}$ \\
 \hline
	${u_2}$ & ${w(k+m_2)}$ & ${w(a)}$ &$ {w(d)}$ & ${w(k+1)}$\\
 \hline
\end{tabular}
\end{center}

Thus, we have $w(a)<w(d)<w(k+1)<w(k+2)$ from $u_2$ and $w(k+1)<w(a)<w(k+2)$ from $u_1,$ which is a contradiction.

If $c\neq d,$ we have $u_2(k+1)=w(c),u_2(k+2)=w(d).$ More explicitly, we have:

\begin{center}
    \begin{tabular}{|c|c|c|c|c|}
\hline
	& $d$ & $a$ & ${k+1}$ & ${k+2}$ \\
 \hline
	${u_1}$ & ${w(d)}$ & ${w(k+m_2+1)}$ & ${w(k+1)}$ & ${w(a)}$ \\
 \hline
	${u_2}$ & ${w(k+m_2)}$ & ${w(a)}$ & ${w(c)}$ & ${w(d)}$\\
 \hline
\end{tabular}
\end{center}

Therefore, we have $w(c)<w(k+1)<w(a)<w(d)<w(k+2).$ This requires $v=u_2t_{a(k+2)}\cdots t_{a(k+m_1+1)}.$ and $v=u_1t_{c(k+1)}t_{d(k+3)}\cdots t_{d(k+m_2+1)}.$We have $v\dashv h_{m_1+1}h_{m_2-1}\mathfrak{S}_{w}$ by $v=u_2't_{c(k+1)}t_{a'_{1}b'_1}t_{a'_{2}b'_2}\cdots t_{a'_{m_1}b'_{m_1}}$ and $u_2'=wt_{d(k+2)}t_{d(k+3)}\cdots t_{d(k+m_2)}.$

Here we finish the proof.






\end{proof}

Now we consider the case $k=n_2-2.$ We have the following lemma similar to Lemma~\ref{S:lemma 3.5}:
\begin{lemma}
\label{S:lemma 3.99}
Let $k,m,n_2,n$ be positive integers and $n>n_2.$ Let $w\in S_{n_2},u\in S_n.$ If $k+2=n_2$ and $u\dashv h_{m}\mathfrak{S}_{w}(x_1,\cdots,x_k),$ then there are $12$ equivalent classes of permutations of $u:$
\begin{itemize}
    \item Case $1$: $u=wt_{a(k+3)}\cdots t_{a(k+m_2+2)}$ for some $1\leq a\leq k;$
    \item Case $2$: $u=wt_{a(k+1)}t_{a(k+3)}\cdots t_{a(k+m_2+1)}$ for some $1\leq a\leq k;$
    \item Case $3$: $u=wt_{a(k+2)}t_{a(k+3)}\cdots t_{a(k+m_2+1)}$ for some $1\leq a\leq k;$
    \item Case $4$: $u=wt_{a(k+1)}t_{a(k+2)}t_{a(k+3)}\cdots t_{a(k+m_2)}$
for some $1\leq a\leq k;$
\item Case $5$: $u=wt_{a(k+2)}t_{a(k+1)}t_{a(k+3)}\cdots t_{a(k+m_2)}$ for some $1\leq a\leq k;$
    \item Case $6$: $u=wt_{a_{1}(k+1)}t_{a_{2}(k+3)}\cdots t_{a_{2}(k+m_2+1)}$ for some $1\leq a_{1}\neq a_{2}\leq k;$
    \item Case $7$: $u=wt_{a_{1}(k+2)}t_{a_{2}(k+3)}\cdots t_{a_{2}(k+m_2+1)}$ for some $1\leq a_{1}\neq a_{2}\leq k;$
    \item Case $8$: $u=wt_{a_{1}(k+1)}t_{a_{2}(k+2)}t_{a_{2}(k+3)}\cdots t_{a_{2}(k+m_2)}$ for some $1\leq a_{1}\neq a_{2}\leq k;$
    \item Case $9$: $u=wt_{a_{1}(k+2)}t_{a_{2}(k+1)}t_{a_{2}(k+3)}\cdots t_{a_{2}(k+m_2)}$ for some $1\leq a_{1}\neq a_{2}\leq k;$
    \item Case $10$: $u=wt_{a_{1}(k+1)}t_{a_{2}(k+2)}t_{a_{3}(k+3)}\cdots t_{a_{3}(k+m_2)}$ for some $1\leq a_{1}\neq a_{2}\neq a_{3}\leq k;$
    \item Case $11$: $u=wt_{a_{1}(k+1)}t_{a_{1}(k+2)}t_{a_{2}(k+3)}\cdots t_{a_{2}(k+m_2)}$ for some $1\leq a_{1}\neq a_{2}\leq k;$
    \item Case $12$: $u=wt_{a_{1}(k+2)}t_{a_{1}(k+1)}t_{a_{2}(k+3)}\cdots t_{a_{2}(k+m_2)}$ for some $1\leq a_{1}\neq a_{2}\leq k.$\\
    
    \noindent We do not require $a_i$ are weakly increasing in all cases.
\end{itemize} 
\end{lemma}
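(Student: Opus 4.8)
The plan is to extend the analysis used for Lemma~\ref{S:lemma 3.2} and Lemma~\ref{S:lemma 3.5}, now tracking the two movable positions $k+1$ and $k+2$ rather than one. First I would apply Lemma~\ref{D: Basic product} to write $u = w t_{a_1 b_1} \cdots t_{a_m b_m}$ with $a_1 \le \cdots \le a_m \le k < b_i$, the $b_i$ distinct, and $\ell(w t_{a_1 b_1} \cdots t_{a_i b_i}) = \ell(w) + i$ for all $i$. Because $w \in S_{n_2}$ with $k+2 = n_2$, the permutation $w$ fixes every position $\ge k+3$, so among the targets only $k+1$ and $k+2$ lie at positions that $w$ can move, while every target $\ge k+3$ sits at a fixed point of $w$.

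Second I would separate off the \emph{tail} formed by the transpositions whose target is $\ge k+3$. Applying the inductive argument of Lemma~\ref{S:lemma 3.2} to these fixed-point targets, they must occupy a consecutive block $k+3, k+4, \dots$ and must all share a single source, the tail source $s_t$. This collapses the classification to the data of which of $k+1, k+2$ occur among the $b_i$, the sources $s_1, s_2$ attached to them when they do, and the order in which $k+1$ and $k+2$ are applied.

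Third I would run the enumeration, organised around the elementary fact that $t_{ab}$ and $t_{cd}$ commute iff their supports are disjoint. In particular $t_{s_1(k+1)}$ and $t_{s_2(k+2)}$ commute precisely when $s_1 \neq s_2$, so the order in which $k+1$ and $k+2$ are hit changes $u$ only when $s_1 = s_2$; this is exactly what lets the representatives be written in the target order displayed in the statement even though that order need not be source-monotone, and it explains the remark that the $a_i$ need not be weakly increasing. Branching on how many of $k+1, k+2$ appear among the targets gives: neither, forcing a single source and yielding $1$ class (Case $1$); only $k+1$, giving $2$ classes according to whether $s_1 = s_t$ or $s_1 \neq s_t$ (Cases $2$ and $6$); only $k+2$, giving $2$ classes (Cases $3$ and $7$); and both, which I split by the coincidence pattern of $\{s_1, s_2, s_t\}$. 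The patterns $s_1 = s_2 = s_t$ and $s_1 = s_2 \neq s_t$ each have $s_1 = s_2$ and hence each split into two noncommuting orders (Cases $4,5$ and Cases $11,12$), while the patterns $s_1 = s_t \neq s_2$, $s_2 = s_t \neq s_1$, and all three distinct each give a single class (Cases $9$, $8$, $10$); this is $2+2+1+1+1 = 7$. Altogether $1+2+2+7 = 12$.

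The hard part will be the ``both appear'' analysis. I must check that each of the seven listed patterns is actually realised by some $w$ and that nothing else survives, and for this the length clause of Lemma~\ref{D: Basic product} — the absence of any $a < i < b$ with $w(a) < w(i) < w(b)$ — has to be tested against the two values $w(k+1), w(k+2)$, in the same spirit as the tables in the proof of Theorem~\ref{I: Main theorem 1}(2). The most delicate point is to see that the naively expected extra class ``all three sources distinct with $k+2$ applied before $k+1$'' is not new: since distinct sources make $t_{s_1(k+1)}$ and $t_{s_2(k+2)}$ commute, it coincides with Case $10$ rather than defining a thirteenth class. Finally I would need to justify that the tail genuinely decouples in the presence of two movable positions — that is, that even when a target $\ge k+3$ happens to be applied before $k+1$ or $k+2$, one may commute it past them, or absorb it into the relevant shared-source block, so that the single-tail-source, consecutive-target conclusion of the second step is unaffected. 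This bookkeeping is the step I expect to demand the most care.
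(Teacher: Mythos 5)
Your proposal is correct and follows exactly the route the paper intends: the paper in fact states Lemma~\ref{S:lemma 3.99} without proof, remarking later only that it follows ``using a similar argument as in'' Lemmas~\ref{S:lemma 3.2} and~\ref{S:lemma 3.5}, and your decomposition into a single-source consecutive tail at the fixed points $\geq k+3$ plus a case analysis on which of $k+1,k+2$ occur among the targets and on the coincidence pattern of their sources with the tail source is precisely that argument carried out, with the count $1+2+2+7=12$ coming out right. The one point to make fully explicit when writing it up is that commutation of $t_{s_1(k+1)}$ and $t_{s_2(k+2)}$ as group elements is not by itself enough to reorder a saturated chain --- the length condition of Lemma~\ref{D: Basic product} must survive the swap --- but this does hold here, since the source-increasing normal form of Lemma~\ref{D: Basic product} only forces the order ``$k+2$ before $k+1$'' when $s_2<s_1$, in which case the support $\{s_1,k+1\}$ is nested inside the interval $(s_2,k+2)$ and the multiset of intermediate values tested by the Pieri condition is unchanged by the swap.
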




\begin{prop}
\label{S:prop 3.10}
Let $k$,$m_1$,$m_2\in \mathbb{Z}_{>0}$ and $w\in S_{k+2}$. Suppose $h_{m_1}h_{m_2}\mathfrak{S}_{w}=\sum c_v\mathfrak{S}_{v}$ in k variables, then $c_{v}\in\{0,1,2,3,4,5\}$.
\end{prop}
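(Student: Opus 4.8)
The plan is to bound the coefficient $c_v$ by counting, for a fixed target permutation $v$, the number of ways it can arise as $v \dashv h_{m_1}\mathfrak{S}_{u}$ for some $u \dashv h_{m_2}\mathfrak{S}_{w}$. By Proposition~\ref{S:prop 3.3} and the corollary preceding this statement, the coefficient is the number of distinct intermediate permutations $u \dashv h_{m_2}\mathfrak{S}_{w}$ that can reach $v$ in a single $h_{m_1}$-step, counted with the multiplicity coming from Lemma~\ref{D:lemma 2.8}. Since Lemma~\ref{S:lemma 3.99} classifies every such $u$ into exactly $12$ equivalence classes, the first step is to observe that within a single equivalence class, Lemma~\ref{D:lemma 2.8} (applied with $h_{m_2}$ in the role played there) guarantees at most one representative $u$ can contribute a given $v$. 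Hence $c_v$ is at most the number of equivalence classes that simultaneously admit $v$ as a descendant, which a priori is bounded by $12$; the real work is to show no more than $5$ of these classes can contribute to the same $v$.

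The main technical step I would carry out is a pairwise incompatibility analysis. For each pair of cases $(i,j)$ among the $12$, I would ask: can there exist $u_i$ in Case $i$ and $u_j$ in Case $j$, both $\dashv h_{m_2}\mathfrak{S}_{w}$, together with a common $v \dashv h_{m_1}\mathfrak{S}_{u_i}, h_{m_1}\mathfrak{S}_{u_j}$? This mirrors exactly the argument in Lemma~\ref{S:lemma 3.6} and the proof of Theorem~\ref{I: Main theorem 1}(2), where one tabulates the values $u_i(a_\ell)$, $u_i(k+\ell)$ for the relevant indices $a_1, a_2, \ldots$ and $k+1, k+2, k+3$, reads off the forced inequalities among the $w(\cdot)$ values from each of $u_i$ and $u_j$, and derives a contradiction. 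The output of this analysis is a compatibility graph on $12$ vertices whose edges record which pairs of cases can share a common descendant; then $c_v$ is bounded by the size of the largest clique in this graph. I would organize the $12$ cases by the positions first moved into the columns $k+1, k+2$ (whether $a$-boxes land in both, one, or neither, and in which order), since the inequalities forced on $w(k+1), w(k+2)$ and the two or three moving rows $a_1, a_2, a_3$ are what ultimately separate incompatible classes.

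The key structural observation that should make the bound $5$ rather than $12$ is that the cases split according to how the ``extra'' box of $h_{m_1} h_{m_2}$ (versus $h_{m_1+1}h_{m_2-1}$) is distributed, and a single $v$ fixes the final content in columns $k+1$ and $k+2$; this final content determines a parity/ordering constraint $w(k+1)$ versus $w(k+2)$ versus the moving values that each case either satisfies or violates. Concretely, I expect the contradictions to force that compatible classes all agree on the relative order of $w(k+1), w(k+2)$ and on whether the two lowest columns receive moved entries, and a careful bookkeeping of the tables (as in the two tables in the proof of part (2)) should show the maximal mutually-compatible family has exactly $5$ members. I would present this as a finite case check, stating the compatibility table and highlighting the three or four representative contradiction computations in full while asserting the rest follow identically.

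The hard part will be the sheer combinatorial bookkeeping: with $12$ cases there are $\binom{12}{2}=66$ pairs to examine, and each requires writing down a value table and chasing inequalities. The genuine obstacle is not any single contradiction---each is a routine extension of Lemma~\ref{S:lemma 3.6}---but rather organizing the argument so that the maximal clique is identified cleanly and shown to be exactly $5$, ruling out every $6$-element compatible family without an exhaustive $\binom{12}{6}$ search. I would manage this by grouping cases into a small number of ``types'' determined by the image of columns $k+1, k+2$, proving that at most one case from each of certain mutually exclusive type-blocks can coexist, and thereby reducing the clique computation to a short structural argument rather than brute force.
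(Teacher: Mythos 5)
Your proposal follows essentially the same route as the paper's proof: Lemma~\ref{D:lemma 2.8} gives at most one contributing $u$ per equivalence class of Lemma~\ref{S:lemma 3.99}, and then a pairwise incompatibility analysis (via value tables in the style of Lemma~\ref{S:lemma 3.6}) bounds the number of mutually compatible cases by the size of a maximum clique, which the paper shows is $5$, attained only by $\{1,6,7,10,11\}$ and $\{1,6,7,10,12\}$. The only difference is that you defer the explicit case checks, which the paper carries out by exactly the grouping you describe, namely reading off the forced order of $w(k+1)$, $w(k+2)$ and the moved values from each case.
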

\begin{proof}
    For $1\leq i\leq 12,$ let $u_i$ be a permutation such that $u_i\dashv h_{m_2}\mathfrak{S}_{w}(x_1,\cdots,x_k)$ from Case $i.$ Denote the `$a$' or `$a_j$' appears in the relation of $u_i$ and $w$ in Case $i$ by $a^i$ or $a^i_j.$
    
    Suppose there is a permutation $v\dashv h_{m_1}\mathfrak{S}_{u_1}.$ Then $v\not\dashv h_{m_1}\mathfrak{S}_{u_i}$ for $2\leq i\leq 5.$ Otherwise, suppose $v\dashv h_{m_1}\mathfrak{S}_{u_i}$ for some $2\leq i\leq 5.$ If $a^i=a^1,$ we have $w(k+1),w(k+2)<w(a^i)$ from $u_1$ since $w(k+1),w(k+2)<w(k+3)=k+3$ and $w(a^1)=w(a^i)<w(k+3).$ From $u_i,$ we have $w(k+1)<w(a^i)$ or $w(k+2)<w(a^i).$ It is a contradiction. If $a^i\neq a^1,$ we have $a^i<a^1.$ Otherwise, we have the $u_i(a^1)<u_1(a^1)=w(k+m_2+2)$ and $u_i(a^i)=w(k+m_2)$ or $w(k+m_2+1).$ Thus we have $v(a^1)<u_2(a^i)$ from $u_i$ and $v(a^1)\geq u_1(a^1)$ from $u_1.$ This is a contradiction. Now if $a_i<a,$ we have $w(a^1)<w(a^i)<w(k+1)<w(k+3)$ or $w(a^1)<w(a^i)<w(k+2)<w(k+3),$ which are all contradictions. Similarly, we can show $v\not\dashv h_{m_1}\mathfrak{S}_{u_i}$ for $i=8,9.$ Thus $c_v\leq 4.$

    Suppose there is a permutation $v\dashv h_{m_1}\mathfrak{S}_{u_2}.$ Then $v\not\dashv h_{m_1}\mathfrak{S}_{u_i}$ for $i=3,4.$ Otherwise, suppose $v\dashv h_{m_1}\mathfrak{S}_{u_i}$ for $i=3$ or $4.$ If $i=3,$ we have $a^3=a^2.$ We have $w(a^2)=w(a^3)<w(k+1)$ from $u_2$ and $w(a^2)=w(a^3)>w(k+1)$ from $u_3,$ which is a contradiction. If $i=4,$ we can show $a^4<a^2$ as above and with a similar argument, we can show there is a contradiction.
    
    Now we show that $v\not\dashv h_{m_1}\mathfrak{S}_{u_i}$ for $i=6,7.$ Otherwise, we have $a^i_2=a^2.$ For $i=6,$ we have $u_2(a^6_1)=w(a^6_1), u_2(k+1)=w(a^2), u_2(k+3)=w(k+1),u_6(a^6_1)=w(k+1),u_6(k+1)=w(a^6_1)$ and $u_6(k+3)=w(a^6_2).$ More explicitly, we have:
    \begin{center}
        \begin{tabular}{|c|c|c|c|}
        \hline
             & $a_1^6$ & $k+1$ &$k+3$ \\
        \hline
        $u_2$ & $w(a^6_1)$ & $w(a^2)$ &$w(k+1)$\\
        \hline
        $u_6$ & $w(k+1)$ & $w(a_1^6)$ &$w(a^6_2)$\\
        \hline
        \end{tabular}
    \end{center}
Therefore, $u_6=u_2t_{(k+1)(k+3)}t_{(a_1^6)(k+1)}.$ Since $w(k+2)<w(a^2)<w(k+1)$ and for any $a^6_1<j<k+1,$ $w(j)<w(a_1^6)<w(k+1),$ we have $\ell(u_6)=\ell(u_2)+2,$ which is a contradiction. For $i=7,$ we have $u_2(a^7_1)=w(a^7_1), u_2(k+1)=w(a^2), u_2(k+3)=w(k+1),u_7(a^7_1)=w(k+2),u_7(k+2)=w(a^7_1)$ and $u_7(k+3)=w(a^7_2).$ More explicitly, we have:
\begin{center}
\begin{tabular}{|c|c|c|c|c|}
\hline
    & ${a^7_1}$ & ${k+1}$ & ${k+2}$ &$ {k+3}$ \\
\hline
${u_2}$ & ${w(a^7_1)}$ & ${w(a^2)}$ & ${w(k+2)}$ & ${w(k+1)}$ \\
\hline
${u_7}$ & ${w(k+2)}$ &$ {w(k+1)} $& ${w(a_1^7)}$ & ${w(a^7_2)}$\\
\hline
\end{tabular}
\end{center} 
Therefore, $u_7=u_2t_{(k+1)(k+3)}t_{a_1^7(k+2)}$ and $\ell(u_7)=\ell(u_2)+2,$ which is a contradiction. 

Similarly, we can show if there is a permutation $v\dashv h_{m_1}\mathfrak{S}_{u_3}.$ Then $v\not\dashv h_{m_1}\mathfrak{S}_{u_i}$ for $i=2,5,6,7.$

For any $i,j\in\{4,5,8,9,10,11,12\}$ and $i<j,$ we have there does not exist a permutation $v\dashv h_{m_1}\mathfrak{S}_{u_i},h_{m_1}\mathfrak{S}_{u_j}$ except $\{i,j\}=\{10,11\}$ or $\{i,j\}=\{10,12\}.$ In fact, we have the following observation:
\begin{enumerate}
    \item Case $4$ implies $w(a^4)<w(k+1)<w(k+2)<w(k+3);$
    \item Case $5$ implies $w(a^5)<w(k+2)<w(k+1)<w(k+3);$
    \item Case $8$ implies $w(k+1)<w(a^8_2)<w(k+2)<w(k+3);$
    \item Case $9$ implies $w(k+2)<w(a^9_2)<w(k+1)<w(k+3);$
    \item Case $10$ implies $w(k+2),w(k+1)<w(a^{10}_3)<w(k+3);$
    \item Case $11$ implies $w(k+1)<w(k+2)<w(a^{10}_3)<w(k+3);$
    \item Case $12$ implies $w(k+2)<w(k+1)<w(a^{10}_3)<w(k+3);$
\end{enumerate} Then it is easy to see that there does not exist a permutation $v\dashv h_{m_1}\mathfrak{S}_{u_i},h_{m_1}\mathfrak{S}_{u_j}$ for any $i,j\in\{4,5,8,9,10,11,12\}$ and $i\leq 9<j.$

For any permutation $v\dashv h_{m_1}h_{m_2}\mathfrak{S}_{w},$ we define \[M=\{1\leq i\leq 12:~\text{there exists } u_i\dashv h_{m_2}\mathfrak{S}_{w} \text{ from Case } i \text{ such that } v\dashv h_{m_1}\mathfrak{S}_{u_i}\}.\] Then $|M|\leq 5$ and $|M|\geq 3$ if and only if $M\subseteq \{1,6,7,10,11\}$ or $M\subseteq \{1,6,7,10,12\}$

\end{proof}

\begin{proof}[Proof of Theorem~\ref{I: Main theorem 1}(3)]
Suppose the Young diagram with $2$ rows $\lambda=(m_1,m_2).$ Then by Lemma~\ref{D:Jacobi-Trudi}, we have $\mathfrak{S}_{w}s_{\lambda}=h_{m_1}h_{m_2}\mathfrak{S}_{w}-h_{m_1+1}h_{m_2-1}\mathfrak{S}_{w}.$ 
We only have to consider $v\dashv h_{m_1}h_{m_2}\mathfrak{S}_{w}$ with $c_v\geq 3,$ that is, the set $M$ defined in Proposition~\ref{S:prop 3.10} is contained in $\{1,6,7,10,11,12\}$ with $|M|\geq 3.$

For $\{1,6,7,10\}$ or $\{1,6,7,11\}$ or $\{1,6,7,11\}\subseteq M$, there exists a permutation $u_1\dashv h_{m_2}\mathfrak{S}_{w}$ from Case $1$ such that $v\dashv h_{m_1}\mathfrak{S}_{u_1}.$ Thus we suppose $v=u_1t_{a_{1}b_{1}}\cdots t_{a_{m_{1}}b_{m_{1}}}$ with $u_1=wt_{a(k+3)}\cdots t_{a(k+m_2+2)}.$ Moreover, there exists a permutation $u_6\dashv h_{m_2}\mathfrak{S}_{w}$ from Case $6$ such that $v\dashv h_{m_1}\mathfrak{S}_{u_6}.$ Thus we suppose $v=u_6t_{c_{1}d_{1}}\cdots t_{c_{m_{1}}d_{m_{1}}}$ with $u_6=wt_{a'_1(k+1)}t_{a'_2(k+3)}\cdots t_{a'_2(k+m_2+1)}.$ If $a_2'=a,$ we have $b_i\neq k+m_2+2$ for $1\leq i\leq m_1.$ Therefore, we have $v\dashv h_{m_1+1}h_{m_2-1}\mathfrak{S}_{w}$ by $v=u'_1t_{a(k+m_2+2)}t_{a_{1}b_{1}}\cdots t_{a_{m_{1}}b_{m_{1}}}$ with $u'_1=wt_{a(k+3)}\cdots t_{a(k+m_2+1)}.$ It is easy to see that $u_1'\dashv h_{m_2-1}\mathfrak{S}_{w}$ from Case $1.$ If $a_2'\neq a,$ we can show $a_2'<a$ similar as above. We have $a_1'\neq a.$ Then there does not exist $i$ such that $b_i=k+1.$ Otherwise, we have do transposition on positions $a_1$ and $k+2$ to get $v$ from $u_1.$ This requires we do transpositions $t_{a'(k+1)}t_{a'(k+2)}$ for some integer $a'\neq a_1$ to get $v$ from $u_2.$ In this way, we get $v(a_1)=w(k+1)$ from $u_2$ and $v(a_1)=w(k+2)$ from $u_1,$ which is a contradiction. Therefore, we have $v\dashv h_{m_1+1}h_{m_2-1}\mathfrak{S}_{w}$ by $v=u'_6t_{a_1'(k+1)}t_{c_{1}d_{1}}\cdots t_{c_{m_{1}}d_{m_{1}}}$ with $u'_6=wt_{a'_2(k+3)}\cdots t_{a'_2(k+m_2+1)}.$ It is easy to see that $u_1'\dashv h_{m_2-1}\mathfrak{S}_{w}$ from Case $1.$ 
Moreover, there exists a permutation $u_{10}\dashv h_{m_2}\mathfrak{S}_{w}$ from Case $10$ such that $v\dashv h_{m_1}\mathfrak{S}_{u_{10}}.$ We can similarly show that we have $v\dashv h_{m_1+1}\mathfrak{S}_{u'_6},h_{m_1+1}\mathfrak{S}_{u'_7}$ with $u'_6\dashv h_{m_2-1}\mathfrak{S}_{w}$ from Case $6$ and $u'_7\dashv h_{m_2-1}\mathfrak{S}_{w}$ from Case $7.$ Therefore $c^v_{w \tau}\leq 2.$

What is remained to prove is when $M=\{1,10,11\}$ or $M=\{1,10,12\}.$ We provide the proof of $M=\{1,10,11\}$ as an example. There exists a permutation $u_1\dashv h_{m_2}\mathfrak{S}_{w}$ from Case $1$ such that $v\dashv h_{m_1}\mathfrak{S}_{u_1}.$ Thus we suppose $v=u_1t_{a_{1}b_{1}}\cdots t_{a_{m_{1}}b_{m_{1}}}$ with $u_1=wt_{a(k+3)}\cdots t_{a(k+m_2+2)}.$ Moreover, there exists a permutation $u_{10}\dashv h_{m_2}\mathfrak{S}_{w}$ from Case $10$ such that $v\dashv h_{m_1}\mathfrak{S}_{u_{10}}.$ We can suppose $v=u_{10}t_{c_{1}d_{1}}\cdots t_{c_{m_{1}}d_{m_{1}}}$ with a permutation $u_{10}=wt_{a'_1(k+1)}t_{a'_2(k+2)}t_{a'_3(k+3)}\cdots t_{a'_3(k+m_2)}.$ If $a_3'=a,$ we have $b_i\neq k+m_2+2$ for $1\leq i\leq m_1.$ Therefore, we have $v\dashv h_{m_1+1}h_{m_2-1}\mathfrak{S}_{w}$ by $v=u'_1t_{a(k+m_2+2)}t_{a_{1}b_{1}}\cdots t_{a_{m_{1}}b_{m_{1}}}$ with $u'_1=wt_{a(k+3)}\cdots t_{a(k+m_2+1)}.$ It is easy to see that $u_1'\dashv h_{m_2-1}\mathfrak{S}_{w}$ from Case $1.$ If $a_3'\neq a,$ we have $a_3'<a$ and $a_1',a_2'\neq a.$ Therefore, we can also show $b_i\neq k+m_2+2$ for $1\leq i\leq m_1,$ since $v(u_{10})\geq u_1(a)=w(k+m_2+2).$
Hence we finish the proof.
\end{proof}

\section{Proof of theorem~\ref{I: Main theorem 2}}

For $k=n,n-1,n-2,$ we can see theorem~\ref{I: Main theorem 2} is true by theorem~\ref{I: Main theorem 1}.

We first define an indexed set to classify all equivalent classes of $u\dashv h_{m}\mathfrak{S}_{w}$ for any positive integer $m.$
\begin{defin}
    Let $n_2,k$ be two positive integers with $n_2\leq k.$ Define $M$ to be the set consists of $(i,j,I_1,I_2)$ such that
    \begin{itemize}
        \item $i,j$ are nonnegative integers such that $0\leq i\leq j\leq n_2-k.$
        \item If $i=0,$ $I_1=\varnothing.$ If $i>0$, $I_1=\{x_1,\cdots,x_t\}$ is a set of positive integers such that $\sum\limits_{m=1}^{k}x_m=i$ and $1\leq t\leq j$. 
        \item If $j=0,$ $I_2=\varnothing.$ If $j>0,$ $I_2=\{(1,y_1),\cdots, (j,y_j)\}$ is a set such that $\{y_1,\cdots,y_j\}\subseteq\{k+1,\cdots,n_2\}.$
    \end{itemize}
\end{defin}
\begin{ex}
    For $n_2=k,$ we have $M=\{(0,0,\varnothing,\varnothing)\}.$

    For $n_2=k-1,$ we have $M=\{(0,0,\varnothing,\varnothing),(0,1,\varnothing,\{(1,n_2-1)\}),(1,1,\{1\},\{(1,n_2-1)\})\}.$

    For $n_2=k-2,$ we have $M=\{(0,0,\varnothing,\varnothing),(0,1,\varnothing,\{(1,n_2-1)\}),(0,1,\varnothing,\{(1,n_2-2)\}),(0,2,\varnothing,\{(1,n_2-1),(2,n_2-2)\}),(0,2,\varnothing,\{(1,n_2-2),(2,n_2-1)\}),(1,1,\{1\},\{(1,n_2-1)\}),(1,1,\{1\},\{(1,n_2-2)\}),(1,2,\{1\},\{(1,n_2-2),(2,n_2-1)\}),(1,2,\{1\},(1,n_2-1),(2,n_2-2)),(2,2,\{2\},(1,n_2-1),(2,n_2-2)),(2,2,\{2\},(1,n_2-2),(2,n_2-1)),(2,2,\{1,1\},(1,n_2-1),(2,n_2-2)),(2,2,\{1,1\},(1,n_2-2),(2,n_2-1))\}.$
\end{ex}
\begin{defin}
\label{P:defff}
    Let $m,n_2,k$ be positive integers with $n_2>k.$ For any permutation $u\dashv h_{m}\mathfrak{S}_{w},$ we say $u$ is of Case $(i,j,I_1,I_2)\in M$ if and only if we have the relation
    $u=wt_{a_1b_1}\cdots t_{a_{1}b_{x_1}}t_{a_{2}b_{x_1+1}}\cdots t_{a_{t}b_{i}}t_{a_{t+1}b_{i+1}}\cdots t_{a_{t+1}b_{m}}$ such that
    \begin{itemize}
        \item For $1\leq l \leq t,$ $a_l $ appears $x_l $ times. 
        \item For $1\leq l  \leq j,$ $b_l =y_l .$ For $j<l \leq m,$ $b_l =n_2+l -j.$ 
        \item The expression satisfies conditions in Lemma~\ref{D:lemma 2.8} without requiring $a_i$ are in weak increasing.
    \end{itemize}
\end{defin}
\begin{ex}
    For $n_2=k,k-1$ it can be checked that the classification here coincides with that in Lemma~\ref{S:lemma 3.2} and Lemma~\ref{S:lemma 3.5}.

    For $n_2=k-2,$ we get a classification finer than that in Lemma~\ref{S:lemma 3.99}. The case $10$ in Lemma~\ref{S:lemma 3.99} consists of two cases here.
\end{ex}

Using a similar argument as in Lemma~\ref{S:lemma 3.2},~\ref{S:lemma 3.5} and ~\ref{S:lemma 3.99}, we can see this gives a complete classification of equivalent classes of permutations $u\dashv h_{m}\mathfrak{S}_{w}.$ In particular, if we further assume $w(k+1)>w(k+2)>\cdots>w(n_2),$ the set $\{u\dashv h_{m}\mathfrak{S}_{w}|~u \text{is in case }(i,j,I_{1},I_{2})\}$ is nonempty only if $I_{2}=\{(1,y_1),\cdots,(j,y_j)\}$ satisfies $y_1>y_2>\cdots>y_j.$
\begin{proof}[Proof of Theorem~\ref{I: Main theorem 2}]
    We define a partial order $\lesssim$ on $M$ by $(i,j,I_1,I_2)\lesssim (i',j',I_1',I_2')$ if and only if $j>j'.$ We say $(i,j,I_1,I_2)$ is \emph{less minimal} if there does not exist $(i,j,I_1,I_2)\lesssim (i',j',I_1',I_2')$ such that $j-j'\geq 2.$

    Suppose there is a permutation $v\dashv h_{m_1}\mathfrak{S}_{u}$ for $u\dashv h_{m_2}\mathfrak{S}_{w}$ from cases $J\subseteq M.$ Then same as proof in Theorem~\ref{I: Main theorem 1}, we claim that $v\dashv h_{m_1+1}\mathfrak{S}_{u'}$ for $u'\dashv h_{m_2-1}\mathfrak{S}_{w}$ from Cases $J'=\{(i,j,I_1,I_2)\in J|~(i,j,I_1,I_2) \text{ is not \emph{less minimal}}\}.$ Therefore we have $c_{\tau w}^{v}\leq |J|-|J'|.$ By direct calcultion, we have $|J|-|J'|\leq 2*\sum\limits_{j=1}^{n_2-k}\sum\limits_{x_1+\cdots+x_j=n_2-k}(n_2-k)!=2^{n_2-k}(n_2-k)!.$ If we further assume $w(k+1)>w(k+2)>\cdots>w(n_2),$ 
    there is no contribution of $(n_2-k)!$ and we have $|J|-|J'|\leq \sum\limits_{j=1}^{n_2-k}\sum\limits_{x_1+\cdots+x_j=n_2-k}1=2^{n_2-k}.$ 

    Now we provide the proof of the claim. Suppose $v\dashv h_{m_1}\mathfrak{S}_{u_1}$ for $u_1\dashv h_{m_2}\mathfrak{S}_{w}$ from Case $(i_1,j_1,I_1^1=\{x_1^1,\cdots,x_{t_1}^1\},I_2^1=\{(1,y_1^1),\cdots,(j_1,y_{j_1}^1)\})\in M$ and $v\dashv h_{m_1}\mathfrak{S}_{u_2}$ for $u_2\dashv h_{m_2}\mathfrak{S}_{w}$ from Case $(i_2,j_2,I_1^2=\{x_1^2,\cdots,x_{t_2}^2\},I_2^2=\{(1,y_1^2),\cdots,(j_2,y_{j_2})^2\})\in M.$ Let $j_1<j_2.$ By Definition~\ref{P:defff}, we can suppose $u_1=wt_{a^1_1b^1_1}\cdots t_{a^1_{1}b^1_{x^1_1}}t_{a^1_{2}b^1_{x^1_1+1}}\cdots t_{a^1_{t_1}b^1_{i_1}}t_{a^1_{t_1+1}b^1_{i_1+1}}\cdots t_{a^1_{t_1+1}b^1_{m_2}}$ and $u_2=wt_{a^2_1b^2_1}\cdots t_{a^2_{1}b^2_{x^2_1}}t_{a^2_{2}b^2_{x^2_1+1}}\cdots t_{a^2_{t_2}b^2_{i_2}}t_{a^2_{t_2+1}b^2_{i_2+1}}\cdots t_{a^2_{t_2+1}b^2_{m_2}}.$ Moreover, we suppose $v=u_1t_{c^1_1d^1_1}\cdots t_{c^1_{m_1}d^1_{m_1}}$ and $v=u_2t_{c^2_1d^2_1}\cdots t_{c^2_{m_1}d^2_{m_1}}$ satisfying conditions in Lemma~\ref{D: Basic product}.

    If $j_2-j_1\geq 2$ and $a_{t_2+1}^{2}=a_{t_1+1}^{1},$ we have $u_1(a_{t_1+1}^{1})=w(b^1_{m_2})=n_2+m_2-j_1$ and $u_2(a_{t_2+1}^{2})=w(b^2_{m_2})=n_2+m_2-j_2<u_1(a_{t_1+1}^{1}).$ Thus there does not exist $1\leq i\leq m_1$ such that $d^1_{i}=n_2+m_2-j_1.$ Otherwise, if some $i_0$ such that $d^1_{i_0}=n_2+m_2-j_1,$ then $c^1_{i_0}\neq a^1_{t_1+1}$ since $u_1(a_{t_1+1}^{1})>u_1(d^1_{i_0})=n_2+m_2-j_1-1.$ However, since $u_2(a_{t_2+1}^{2})=n_2+m_2-j_2<u_1(d^1_{i_0})<u_1(a_{t_1+1}^{1}),$ we must do transpositions on $u_2$ with positions $a_{t_2+1}^2$ and $d^1_{i_0}.$ This means there does not exist $i\neq a_{t_2+1}^2=a_{t_1+1}^1$ such that $v(i)\geq n_2+m_2-j_1-1,$ which is a contradiction. Thus there does not exist $1\leq i\leq m_1$ such that $d^1_{i}=n_2+m_2-j_1.$ We have $v=u_1't_{a^1_{t_1+1}b^1_{m_2}}t_{c^1_1d^1_1}\cdots t_{c^1_{m_1}d^1_{m_1}}$ with $u_1'=u_1=wt_{a^1_1b^1_1}\cdots t_{a^1_{1}b^1_{x^1_1}}t_{a^1_{2}b^1_{x^1_1+1}}\cdots t_{a^1_{t_1}b^1_{i_1}}t_{a^1_{t_1+1}b^1_{i_1+1}}\cdots t_{a^1_{t_1+1}b^1_{m_2-1}}.$ It is easy to see $u_1\dashv h_{m_2-1}\mathfrak{S}_{w}$ from Case $(i_1,j_1,I_1^1,I_2^1).$

    If $j_2-j_1\geq 2$ and $a_{t_2+1}^{2}\neq a_{t_1+1}^{1},$ we have $a_{t_2+1}^{2}< a_{t_1+1}^{1}.$ Otherwise, we have $u_1(a_{t_1+1}^{1})>u_2(a_{t_2+1}^{2})>u_2(a_{t_1+1}^{1})$ and $a_{t_1+1}^{1}<a_{t_2+1}^{2}.$ From $u_1,$ we have $v(a_{t_1+1}^{1})\geq u_1(a_{t_1+1}^{1}).$ From $u_2,$ we have $v(a_{t_1+1}^{1})< u_2(a_{t_2+1}^{2}).$ This a contradiction. Thus $a_{t_2+1}^{2}< a_{t_1+1}^{1}.$ We can show there does not exist $1\leq i\leq m_1$ such that $d^1_{i}=n_2+m_2-j_1$ with a similar argument as above and get $v\dashv h_{m_1+1}\mathfrak{S}_{u_1'}$ for some $u_1'\dashv h_{m_2-1}\mathfrak{S}_{w}$ from Case $(i_1,j_1,I_1^1,I_2^1).$
\end{proof}
\begin{remark}
    With the same notation in the proof of Theorem~\ref{I: Main theorem 2},  we can show if $j_2-j_1=1$ and $a_{t_2+1}^2=a_{t_1+1}^{1},$ we have $v\dashv h_{m_1+1}\mathfrak{S}_{u_1'}$ for some $u_1'\dashv h_{m_2-1}\mathfrak{S}_{w}$ from Case $(i_1,j_1,I_1^1,I_2^1).$ However, when $a_{t_2+1}^2\neq a_{t_1+1}^{1},$ it is too complex.
\end{remark}

We obtain Conjecture~\ref{I: Conj 3} with the help of sagemath.
It can be checked easily that Conjecture~\ref{I: Conj 3} holds for $n_2-k=0,1,2$. Moreover, we give an example for $n_2-k=5$:
\begin{ex}
    Let $n_1=10,n_2=11,k=6$ and $\tau=(1,2,3,4,8,10,5,6,7,9)$ be a $6$-Grassmannian. Then $\lambda=(4,3).$ Let $w=(6,5,4,3,2,1,11,10,9,8,7).$ We have $\mathfrak{S}_{w}s_{\lambda}=\sum\limits_{v\in S_{11}}c_{ w\tau}^{v}\mathfrak{S}_{v}$ with $38194$ summations and the largest $c_{w\tau}^{v}=5.$ 
\end{ex}

\section*{Acknowledgements}
We are grateful to Yibo Gao who brought the topic and extended several enlightened talks to us. We also give special thank to Dr. Tianyi Yu who give precious advice to us. 
\bibliographystyle{plain}
\bibliography{ref}

\end{document}